\numberwithin{equation}{section}
\setlist[enumerate,1]{label={\rm(\alph*)}, ref={\rm\alph*}}
\theoremstyle{plain}
\newtheorem{thm}{Theorem}[section]
\newtheorem{prop}[thm]{Proposition}
\newtheorem{lem}[thm]{Lemma}
\newtheorem{cor}[thm]{Corollary}
\newtheorem*{thmA}{Theorem A}
\newtheorem*{propB}{Proposition B}
\newtheorem*{Kly}{Klyachko's classification theorem}
\theoremstyle{definition}
\theoremstyle{remark}
\newtheorem{exmp}[thm]{Example}
\newcommand{\Spec}{\operatorname{Spec}}
 \newcommand{\gC}{\Gamma}
 \newcommand{\gD}{\Delta}
\newcommand{\gi}{\iota} 
\newcommand{\gl}{\lambda} 
 \newcommand{\gO}{\Omega}
\newcommand{\gs}{\sigma} 
\newcommand{\bbC}{\mathbb{C}}
\newcommand{\bbP}{\mathbb{P}}
\newcommand{\bbR}{\mathbb{R}}
\newcommand{\bbZ}{\mathbb{Z}}
\newcommand{\calE}{\mathcal{E}}
\newcommand{\calF}{\mathcal{F}}
\newcommand{\calL}{\mathcal{L}}
\newcommand{\calO}{\mathcal{O}}
\newcommand{\calT}{\mathcal{T}}
\newcommand{\vol}{\operatorname{vol}}
\newcommand{\Span}{\operatorname{span}}
\newcommand{\Cone}{\operatorname{Cone}}
\newcommand{\Ext}{\operatorname{Ext}}
\newcommand{\rvline}{\hspace*{-\arraycolsep}\vline\hspace*{-\arraycolsep}}
\newcommand{\lra}{\longrightarrow}
\title{Affine subspace concentration conditions}
\author{Kuang-Yu Wu}
\address{Department of Mathematics, Statistics, and Computer Science, University of Illinois at Chicago, 322 Science and Engineering Offices (M/C 249), 851 S. Morgan Street, Chicago, IL 60607, USA}
\email{kwu33@uic.edu}
\begin{document}



\maketitle

\begin{prelims}

\DisplayAbstractInEnglish

\bigskip

\DisplayKeyWords

\medskip

\DisplayMSCclass

\end{prelims}


\newpage

\setcounter{tocdepth}{1}

\tableofcontents


\section{Introduction}

{
Subspace concentration conditions for polytopes and more generally for
convex bodies have been of interest to convex geometers recently.  One
main reason is its close relation to the logarithmic Minkowski
problem, which asks what finite Borel measure on $S^{n-1}$ is the
cone-volume measure of a convex body in~$\bbR^n$.  In terms of
polytopes, given $v_1,\ldots,v_m\in S^{n-1}$ and
$V_1,\ldots,V_m\in(0,\infty)$, the problem asks if there is a polytope
$P\subseteq\bbR^n$ that contains the origin and has exactly $m$ facets
$F_1,\ldots,F_m$ such that the normal vector of each facet $F_k$ is
exactly~$v_k$ and the volume of the cone over each $F_k$ with vertex
at the origin is exactly~$V_k$.  As it turns out, subspace
concentration conditions give a sufficient condition for a positive
answer to the logarithmic Minkowski problem.  See, for example,
\cite{BLYZ13,Zhu14,CLZ18}.  }

{
In this paper,
we introduce new conditions of the same type,
which we call the \textit{affine subspace concentration conditions},
and we prove that these conditions hold for smooth and reflexive lattice polytopes with barycenter at the origin.

\begin{thmA}[$=\,$Theorem~\ref{thm:ASCC}, Affine subspace concentration conditions]
Let $P\subseteq\bbR^n$ be a smooth and reflexive lattice polytope with barycenter at the origin, and denote its facets by $P_1,\ldots,P_m$.  For each facet $P_k$, let $v_k\in\bbZ^n$ be its primitive inner normal vector, and let $\vol(P_k)$ be its lattice volume with respect to the intersection of\, $\bbZ^n$ and the affine span of\, $P_k$.
	
Then, for every proper affine subspace $A\subsetneq\bbR^n$, the following inequality holds: 
	$$
	\frac{1}{\dim A +1}\sum_{k\,:\,v_k\in A}\vol(P_k)
	\leq
	\frac{1}{n+1}\sum_{k=1}^m \vol(P_k).
	$$
In addition, whenever equality holds for some $A$, equality also holds for some affine subspace $A'$ complementary to~$A$.
\end{thmA}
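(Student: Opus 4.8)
The plan is to pass to toric geometry and recognise Theorem~A as a slope-(poly)stability statement for a single equivariant vector bundle. Since $P$ is smooth and reflexive, its normal fan defines a smooth projective Fano toric variety $X=X_P$ of dimension $n$: writing $N=\bbZ^n$ for the ambient lattice of the fan, $N_{\bbR}=N\otimes\bbR$ and $N_{\bbC}=N\otimes\bbC$, the rays are $\bbR_{\geq 0}v_k$, with torus-invariant prime divisors $D_k$, and $-K_X=\sum_k D_k$ is ample with moment polytope $P$. One has $(-K_X)^{n-1}\cdot D_k=(n-1)!\,\vol(P_k)$, hence $(-K_X)^n=(n-1)!\sum_k\vol(P_k)$. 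Finally, the hypothesis that the barycenter of $P$ is the origin is equivalent to the vanishing of the Futaki invariant of $X$, hence, by the theorem of Wang--Zhu, to $X$ carrying a K\"ahler--Einstein metric; in particular $X$ is K-polystable.

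Consider the canonical extension
\[
0\longrightarrow\calO_X\longrightarrow\calE\longrightarrow\calT_X\longrightarrow 0
\]
with extension class $c_1(\calT_X)\in\Ext^1(\calT_X,\calO_X)=H^1(X,\Omega^1_X)$. This $\calE$ is an equivariant bundle of rank $n+1$ with $c_1(\calE)=-K_X$, so its slope with respect to $-K_X$ is
\[
\mu(\calE)=\frac{(-K_X)^n}{n+1}=\frac{(n-1)!}{n+1}\sum_{k}\vol(P_k).
\]
The first substantive step is to pin down the Klyachko data of $\calE$: its generic fibre can be identified with $\bbC\oplus N_{\bbC}$ so that, in a suitable normalisation of Klyachko's conventions, the filtration on the ray $\bbR_{\geq 0}v_k$ reads
\[
E^{v_k}(j)=\bbC\oplus N_{\bbC}\ \ (j\leq 0),\qquad \bbC\cdot(1,v_k)\ \ (j=1),\qquad 0\ \ (j\geq 2),
\]
the ``diagonal'' lines $\bbC\cdot(1,v_k)$, carrying the \emph{same} scalar on every ray, being precisely what realises the nontrivial class $c_1(\calT_X)=[\sum_k D_k]$ (for the split bundle $\calO_X\oplus\calT_X$ the distinguished line on the ray of $v_k$ would be $\bbC\cdot(0,v_k)$). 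I would prove this as a lemma, comparing with the Klyachko filtrations of the sub $\calO_X$ and the quotient $\calT_X$ and checking Klyachko's compatibility condition on maximal cones.

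Since the Harder--Narasimhan filtration is canonical, slope-semistability of the equivariant bundle $\calE$ may be tested on equivariant reflexive subsheaves, and such a subsheaf is the datum of a subspace $F\subseteq\bbC\oplus N_{\bbC}$ together with the induced filtrations $F\cap E^{v_k}(\bullet)$; the Klyachko degree formula then gives $\deg_{-K_X}\calF=(n-1)!\sum_{k\,:\,(1,v_k)\in F}\vol(P_k)$, so a subsheaf ``sees'' only which of the vectors $(1,v_k)$ it contains. To a proper affine subspace $A\subsetneq N_{\bbR}=\bbR^n$ attach the complexified cone $F_A:=\bbC\cdot(\{1\}\times A)\subseteq\bbC\oplus N_{\bbC}$; then $\dim F_A=\dim A+1$ and $(1,v_k)\in F_A\iff v_k\in A$, whence
\[
\mu(\calF_A)=\frac{(n-1)!}{\dim A+1}\sum_{k\,:\,v_k\in A}\vol(P_k).
\]
Conversely, replacing $F$ by $F\cap\overline F$ alters neither $\{k:(1,v_k)\in F\}$ nor raises $\dim F$, hence does not lower the slope; and a real subspace either lies in $0\oplus N_{\bbC}$, where $\calF$ has degree $0$ and slope $0<\mu(\calE)$, or equals $F_A$ for the affine subspace it cuts out on $\{1\}\times N_{\bbR}$. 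Hence the inequality of Theorem~A, ranging over all proper affine $A$, is \emph{precisely} the assertion that $\calE$ is slope-semistable with respect to $-K_X$, and equality for $A$ is equivalent to $\calF_A$ attaining the slope $\mu(\calE)$. The crux is that $\calE$ is in fact slope-\emph{polystable}: granting the Klyachko description, this is the one place the barycenter hypothesis is used, and I would derive it from the K\"ahler--Einstein metric on $X$, whose induced natural Hermitian metric on the canonical extension is Hermitian--Einstein (a curvature computation, using that the extension class is represented by the Ricci form), so that $\calE$ is slope-polystable by the Kobayashi--Hitchin correspondence. (A direct combinatorial proof of the inequalities $\mu(\calF_A)\leq\mu(\calE)$ appears to be the main obstacle: they would all have to be extracted from the single balancing identity $\sum_k\vol(P_k)\,b_k=0$, where $b_k$ is the barycenter of the facet $P_k$, and controlling the sets $\{k:v_k\in A\}$ uniformly in $A$ is the delicate point.)

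It remains to deduce the equality clause. Suppose equality holds for $A$; then $\calF_A\subseteq\calE$ realises the slope $\mu(\calE)$, and since $\calE$ carries an equivariant and real Hermitian--Einstein metric, the orthogonal complement $\calG:=\calF_A^{\perp}$ furnishes an equivariant, real splitting $\calE=\calF_A\oplus\calG$ with $\mu(\calG)=\mu(\calE)$. Now $\deg_{-K_X}\calG=(n-1)!\sum_{k:v_k\notin A}\vol(P_k)>0$ — not all $v_k$ lie in the proper affine subspace $A$, as the $v_k$ positively span $N_{\bbR}$ — so, by the correspondence above, $\calG=\calF_{A'}$ for an affine subspace $A'$ cut out on $\{1\}\times N_{\bbR}$, with $\dim A'=n-1-\dim A$ and $F_A\oplus F_{A'}=\bbC\oplus N_{\bbC}$; equivalently, the linear spans of $\{1\}\times A$ and $\{1\}\times A'$ in $\bbR\oplus N_{\bbR}=\bbR^{n+1}$ are complementary, i.e.\ $A'$ is complementary to $A$. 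Then $\mu(\calF_{A'})=\mu(\calG)=\mu(\calE)$ is the desired equality for $A'$, and the proof is complete.
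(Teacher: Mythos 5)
Your overall strategy is the same as the paper's: form the canonical extension $0\to\calO_X\to\calE\to\calT_X\to 0$ with class $c_1(\calT_X)$, read off its Klyachko filtrations (your lines $\bbC\cdot(1,v_k)$ are the paper's $\Span_{\bbC}\{(v_k,-1)\}$ up to the obvious change of coordinates), get polystability with respect to $-K_X$ from the K\"ahler--Einstein metric via Tian and the easy direction of Donaldson--Uhlenbeck--Yau, convert the slope inequalities into the stated volume inequalities via the Hering--Nill--Suess criterion, and pass from linear subspaces of $\bbR^{n+1}$ to affine subspaces of $\bbR^n$ by the cone correspondence, using that subspaces of $\{x_{n+1}=0\}$ contribute slope $0<\mu(\calE)$. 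The reductions complex $\to$ real $\to$ affine and the equality clause are handled essentially as in the paper.

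The genuine gap is in the one place where the paper does its real work, namely the proof of your ``first substantive step'' (the paper's Proposition~\ref{prop:fltrE}). You assert that the filtrations with diagonal lines $\bbC\cdot(1,v_k)$ are ``precisely what realises the nontrivial class $c_1(\calT_X)$,'' and propose to prove this by comparing with the filtrations of the sub and quotient and checking Klyachko compatibility. That check would only show that your equivariant bundle is \emph{some} extension of $\calT_X$ by $\calO_X$; it does not identify the extension class. Since $\Ext^1(\calT_X,\calO_X)\cong H^1(X,\Omega^1_X)$ is typically of dimension the Picard rank, there are many non-split extensions (heuristically with lines $\bbC\cdot(a_k,v_k)$ for various $(a_k)$), and Tian's theorem applies only to the one with class $c_1(\calT_X)=\sum_k[D_k]$; pinning down that your Klyachko data gives exactly this class requires an actual computation (e.g.\ a \v{C}ech cocycle comparison). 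The paper avoids this by using Wahl's geometric model $\calE\cong\calT_Y(-\log X)|_X$, where $Y$ is the cone over the anticanonical embedding of $X$, which is manifestly toric and whose local frames $\{\partial/\partial z_l,\,z\,\partial/\partial z\}$ yield the filtrations directly. A second, smaller, issue: in the equality case you take $\calG=\calF_A^{\perp}$ for the Hermitian--Einstein metric and assert it is ``real''; the metric need not respect the real structure on the fibre $E$, so this needs an argument. The paper instead takes any complex complement $F'$ of the same slope from polystability, sets $W'=F'\cap V$, and uses the semistability inequality plus a dimension count to show $W'$ is a real complement realizing equality.
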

}


{ This theorem is inspired by \cite{HNS19}, in which the subspace concentration properties are shown for polytopes with the same assumptions.

\begin{prop}[\textit{cf.}~\protect{\cite[Corollary 1.7]{HNS19}}]
Let $P\subseteq\bbR^n$ be a smooth and reflexive lattice polytopes with barycenter at the origin.  Then, for every proper linear subspace $F\subsetneq\bbR^n$, the following inequality holds: 
	$$
	\frac{1}{\dim F}\sum_{k\,:\,v_k\in F}\vol(P_k)
	\leq
	\frac{1}{n}\sum_{k=1}^m \vol(P_k).
	$$
In addition, whenever equality holds for some $F$,  equality also holds for some subspace $F'$ complementary to~$F$.
\end{prop}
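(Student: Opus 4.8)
The plan is to recast the inequality in toric--geometric terms and deduce it from the slope semistability of the tangent bundle of the Fano toric variety attached to $P$. Since $P$ is smooth and reflexive, let $X$ be the smooth toric Fano variety whose fan is the normal fan of $P$: its rays are spanned by the $v_k$, the corresponding torus-invariant prime divisors $D_k$ satisfy $-K_X=\sum_{k=1}^m D_k$ (an ample class), and toric intersection theory gives $D_k\cdot(-K_X)^{n-1}=(n-1)!\,\vol(P_k)$, hence $(-K_X)^n=(n-1)!\sum_k\vol(P_k)$. The hypothesis that $P$ has barycenter at the origin is exactly the vanishing of the Futaki invariant of $X$; by the theorem of Wang--Zhu this forces $X$ to carry a K\"ahler--Einstein metric $\go$, with $\operatorname{Ric}(\go)=\go\in c_1(-K_X)$. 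The K\"ahler--Einstein equation says precisely that the metric induced by $\go$ on $\calT_X$ is Hermite--Einstein, so by the Kobayashi--Hitchin correspondence $\calT_X$ is slope \emph{polystable} with respect to $-K_X$. This is the one substantial input; the remainder is a combinatorial translation via Klyachko's classification of toric vector bundles.

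By Klyachko's theorem the torus-equivariant saturated subsheaves of $\calT_X$ correspond to the linear subspaces $F$ of its generic fiber $\bbR^n\otimes\bbC$, the subsheaf $\calT_F$ carrying the filtrations induced from those of $\calT_X$. Since the unique proper step of the $k$-th Klyachko filtration of $\calT_X$ is the line $\bbC v_k$, one computes $c_1(\calT_F)=\sum_{k:\,v_k\in F}D_k$, so that
\[
\mu(\calT_F)=\frac{1}{\dim F}\sum_{k:\,v_k\in F}D_k\cdot(-K_X)^{n-1}=\frac{(n-1)!}{\dim F}\sum_{k:\,v_k\in F}\vol(P_k),\qquad \mu(\calT_X)=\frac{(-K_X)^n}{n}=\frac{(n-1)!}{n}\sum_{k=1}^m\vol(P_k).
\]
A destabilizing subsheaf may always be replaced by its saturation, and the maximal destabilizing subsheaf is canonical hence torus-equivariant; thus semistability of $\calT_X$ is equivalent to $\mu(\calT_F)\le\mu(\calT_X)$ for every proper $F$, which is precisely the asserted inequality.

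For the equality clause, suppose equality holds for some proper $F$. Then $\calT_F$ is a saturated subsheaf of slope exactly $\mu(\calT_X)$, so $\calT_X$ is strictly semistable; polystability then produces a splitting $\calT_X\cong\calT_F\oplus\calQ$ with $\mu(\calQ)=\mu(\calT_X)$, and $\calQ$, being a direct summand of a toric bundle, is again equivariant and saturated, hence $\calQ=\calT_{F'}$ where, reading off generic fibers, $F'$ is a linear complement of $F$. Unwinding $\mu(\calT_{F'})=\mu(\calT_X)$ gives the equality for the complementary subspace $F'$.

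The main obstacle is the stability input itself: that a smooth toric Fano whose anticanonical polytope is centered at the origin has slope-polystable tangent bundle. I see no way to avoid using both the existence of K\"ahler--Einstein metrics on such toric Fanos and the Kobayashi--Hitchin correspondence; a purely convex-geometric argument --- extracting the whole family of subspace inequalities directly from the single identity ``barycenter $=0$'', in the spirit of B\"or\"oczky--Henk for general centered bodies --- would be an alternative route but is not what the toric framework suggests. A secondary, more bookkeeping-like difficulty is pinning down the Klyachko description of $\calT_X$ with the correct sign and index conventions, and justifying the reduction of semistability to the equivariant subsheaves $\calT_F$.
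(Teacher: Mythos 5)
Your proof is correct and follows essentially the same route as the paper: the paper does not actually reprove this proposition (it cites it from Hering--Nill--S\"u\ss), but its proof of the affine version (Proposition~\ref{prop:ver1} and Theorem~\ref{thm:ASCC}) is exactly your argument with the canonical extension $\calE$ of $\calT_X$ by $\calO_X$ in place of $\calT_X$ --- Wang--Zhu/Mabuchi for the K\"ahler--Einstein metric, Hermite--Einstein implies polystability, the Klyachko filtrations jumping on $\bbC v_k$ at level $1$, and the Hering--Nill--S\"u\ss{} slope criterion for equivariant saturated subsheaves. The one point you gloss over (as does the paper) is in the equality case, where one should justify that the complementary summand $\calQ$ produced by polystability can be taken to be equivariant and saturated, hence of the form $\calT_{F'}$ for a genuine linear complement $F'$.
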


In fact, our result is neither stronger nor weaker than the original subspace concentration conditions; specifically, our result gives something new whenever $A$ is \textit{not} a linear subspace of $\bbR^n$.

Similar subspace concentration conditions have also been studied in more general settings.  The subspace concentration conditions are proved in \cite{HL14} for centered polytopes and more generally in \cite{BH16} for centered convex bodies.  }

\begin{exmp}
Let $P$ be a reflexive lattice triangle.
The affine subspace concentration conditions hold for $P$ if and only if the lattice length of each side of $P$ does not exceed one third of the perimeter and the total lattice length of any two sides of $P$ does not exceed two thirds of the perimeter.  This is equivalent to all three sides of $P$ having the same lattice length.
	
Figure~\ref{fig:1a} is the unique smooth and reflexive lattice triangle, up to change of basis.  Its barycenter is clearly at the origin.  The affine subspace concentration conditions are indeed satisfied since each of its sides has lattice length $3$.
	
Figure~\ref{fig:1b} is one example of polytopes for which the affine subspace concentration conditions fail.  The lattice lengths of its sides are $1$, $1$, and $2$.  Note that the triangle is reflexive but not smooth, and its barycenter is not at the origin.
	
We do not claim that the converse to our main theorem holds.  In fact, Figure~\ref{fig:1c} is a lattice triangle that has its barycenter at the origin and is reflexive but not smooth.  However, it satisfies the affine subspace concentration conditions since all three sides has lattice length $1$.
	
	\begin{figure}[h]
		\begin{subfigure}{0.33\textwidth}
			\begin{center}
				\begin{tikzpicture}
					\filldraw [black] 	(0,0) circle (2pt)
					(-1,0) circle (1pt) 
					(1,0) circle (1pt)
					(2,0) circle (1pt)
					(-1,2) circle (1pt)
					(0,2) circle (1pt)
					(1,2) circle (1pt)
					(2,2) circle (1pt)
					(-1,1) circle (1pt)
					(0,1) circle (1pt)
					(1,1) circle (1pt)
					(2,1) circle (1pt)
					(-1,-1) circle (1pt)
					(0,-1) circle (1pt)
					(1,-1) circle (1pt)
					(2,-1) circle (1pt);
					\draw (-1,2) -- (2,-1) -- (-1,-1) -- (-1,2);
				\end{tikzpicture}
			\end{center}
			\caption{}
			\label{fig:1a}
		\end{subfigure}
		\begin{subfigure}{0.32\textwidth}
			\begin{center}
				\begin{tikzpicture}
					\filldraw [white] 
					(0,1.5) circle (1pt)
					(0,-1.5) circle (1pt);
					\filldraw [black] 
					(0,0) circle (2pt)
					(-1,0) circle (1pt)
					(1,0) circle (1pt)
					(-1,1) circle (1pt)
					(0,1) circle (1pt)
					(1,1) circle (1pt)
					(-1,-1) circle (1pt)
					(0,-1) circle (1pt)
					(1,-1) circle (1pt);
					\draw (0,1) -- (1,-1) -- (-1,-1) -- (0,1);
				\end{tikzpicture}
			\end{center}
			\caption{}
			\label{fig:1b}
		\end{subfigure}
		\begin{subfigure}{0.33\textwidth}
			\begin{center}
				\begin{tikzpicture}
					\filldraw [white] 
					(0,1.5) circle (1pt)
					(0,-1.5) circle (1pt);
					\filldraw [black]
					(0,0) circle (2pt)
					(-1,0) circle (1pt)
					(1,0) circle (1pt)
					(-1,1) circle (1pt)
					(0,1) circle (1pt)
					(1,1) circle (1pt)
					(-1,-1) circle (1pt)
					(0,-1) circle (1pt)
					(1,-1) circle (1pt);
					\draw (0,1) -- (1,0) -- (-1,-1) -- (0,1);
				\end{tikzpicture}
			\end{center}
			\caption{}
			\label{fig:1c}
		\end{subfigure}
		\caption{Reflexive lattice triangles}
		\label{fig:a}
	\end{figure}
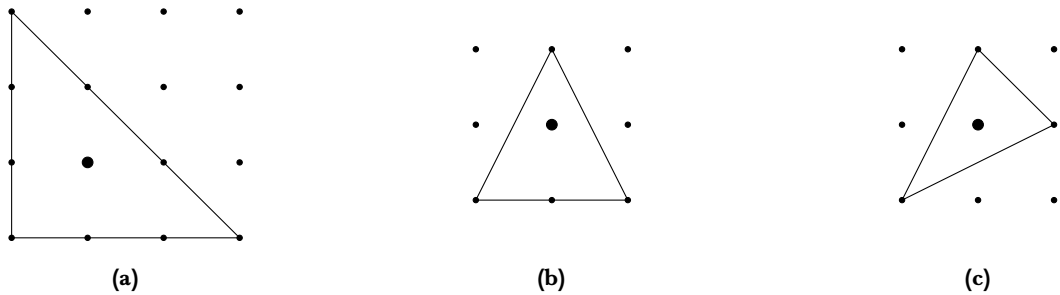
\end{exmp}

\subsection{Idea of the proof}

{ As in \cite{HNS19}, our proof of the main result relies on toric geometry as well as some K\"ahler geometry.

First, let $X$ be the smooth Fano toric variety corresponding to the polytope $P$.  We consider the canonical extension
$$
0\lra\calO\lra\calE\lra\calT_X\lra 0
$$
with the extension class $c_1(\calT_X)\in\Ext^1(\calT_X,\calO)$.  The main work we will do is in Section~\ref{sec:calE}, where we show that this rank $n+1$ extension $\calE$ can be made a toric vector bundle and compute its corresponding filtrations in the sense of Klyachko's classification theorem of toric vector bundles.

\begin{propB}[$=\,$Proposition~\ref{prop:fltrE}]
There is a $T$-action on the extension $\calE$ that makes it a toric vector bundle whose corresponding filtrations $E^{\rho}(i)$ of the $(n+1)$-dimensional\, $\bbC$-vector space $E\cong\bbC^{n+1}$ are given by
	$$
	E^{\rho_k}(i)
	=
	\begin{cases}
		E,							&	i\leq 0,	\\
		\Span_{\bbC}\{(v_k,-1)\},	&	i=1,		\\
		0,							&	i>2.
	\end{cases}
	$$
\end{propB}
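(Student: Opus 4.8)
The plan is to prove this in three steps, along the lines of \cite{HNS19}: first make $\calE$ into a toric vector bundle, then read off the shape of its Klyachko filtrations from the exactness of Klyachko's correspondence, and finally pin down the one remaining one-dimensional datum by identifying an equivariant extension class. Write $\Sigma$ for the fan of $X$, whose rays $\rho_1,\dots,\rho_m$ are generated by $v_1,\dots,v_m$, and set $N_\bbC=N\otimes_\bbZ\bbC$.

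\textbf{Step 1 (the extension is toric).} The canonical extension is classified by $c_1(\calT_X)\in\Ext^1(\calT_X,\calO_X)\cong H^1(X,\gO^1_X)$. On a smooth projective toric variety one has $H^1(X,\gO^1_X)\cong\Pic(X)\otimes_\bbZ\bbC$, which is a trivial $T$-module, so $c_1(\calT_X)$ is $T$-invariant. Because $T$ is linearly reductive, $H^{>0}(T,-)$ vanishes on rational representations, and hence the forgetful map $\Ext^1_T(\calT_X,\calO_X)\to\Ext^1(\calT_X,\calO_X)$ is an isomorphism onto the $T$-invariant subspace — here all of it. Thus $c_1(\calT_X)$ lifts to an equivariant extension class, which makes $\calE$ a $T$-equivariant, hence toric, vector bundle once we fix the trivial linearization on $\calO_X$ and, on $\calT_X$, the linearization whose Klyachko filtrations are $\calT_X^{\rho_k}(i)=N_\bbC$ for $i\le0$, $\Span_\bbC\{v_k\}$ for $i=1$, and $0$ for $i\ge2$.

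\textbf{Step 2 (shape of the filtrations).} With the trivial linearization, $\calO_X$ has one-dimensional Klyachko space and filtrations $\calO_X^\rho(i)=\bbC$ for $i\le0$ and $0$ for $i\ge1$. A short exact sequence of toric vector bundles yields, at every ray $\rho$ and level $i$, a short exact sequence of the associated filtered vector spaces, so $0\to\calO_X^\rho(i)\to E^\rho(i)\to\calT_X^\rho(i)\to0$ is exact for all $\rho$ and $i$. A dimension count then forces $\dim E=n+1$, $E^{\rho_k}(i)=E$ for $i\le0$, $\dim E^{\rho_k}(1)=1$, and $E^{\rho_k}(i)=0$ for $i\ge2$: precisely the shape claimed. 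Choosing a vector-space complement to $\im(\calO_X\into\calE)$ identifies $E$ with $N_\bbC\oplus\bbC$, with $\calO_X$ the summand $0\oplus\bbC$ and $\calT_X$ recovered as the projection $E\onto N_\bbC$; then $E^{\rho_k}(1)$ maps isomorphically onto $\Span_\bbC\{v_k\}$, so $E^{\rho_k}(1)=\Span_\bbC\{(v_k,t_k)\}$ for a unique scalar $t_k$. It remains to show that one can take all $t_k=-1$.

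\textbf{Step 3 (identifying the class).} For $t=(t_\rho)_{\rho\in\Sigma(1)}\in\bbC^{\Sigma(1)}$, let $\calE_t$ be the toric vector bundle with Klyachko space $E=N_\bbC\oplus\bbC$, filtrations of the above shape, and $E^\rho(1)=\Span_\bbC\{(v_\rho,t_\rho)\}$. Klyachko's compatibility condition holds on each maximal cone $\sigma$: smoothness makes the $v_\rho$ ($\rho\in\sigma(1)$) a basis of $N_\bbR$, so the vectors $(v_\rho,t_\rho)$ are independent and, together with $(0,1)$, give a basis of $E$ adapted to all filtrations along $\sigma$. Each $\calE_t$ then fits in an extension $0\to\calO_X\to\calE_t\to\calT_X\to0$ with the linearizations above, and by Step 2 every such extension arises this way. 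Any isomorphism of extensions $\calE_t\cong\calE_{t'}$ is, in the splitting $E=N_\bbC\oplus\bbC$, a map $(v,s)\mapsto(v,\langle d,v\rangle+s)$ with $d\in M_\bbC$, and it carries $\Span_\bbC\{(v_\rho,t_\rho)\}$ to $\Span_\bbC\{(v_\rho,t'_\rho)\}$ exactly when $t'_\rho-t_\rho=\langle d,v_\rho\rangle$ for all $\rho$. Hence $t\mapsto[\calE_t]$ descends to a bijection $\bbC^{\Sigma(1)}/M_\bbC\xrightarrow{\ \sim\ }\Ext^1(\calT_X,\calO_X)\cong\Pic(X)\otimes\bbC$ which, under the usual presentation of $\Pic(X)$ by torus-invariant divisors (the matching being checked, e.g., by restricting to the invariant $\bbP^1$'s, whose classes span $N_1(X)$), sends the diagonal class $[\mathbf 1]$ to $c_1(\calO_X(\sum_\rho D_\rho))=c_1(-K_X)=c_1(\calT_X)$. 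Therefore $\calE\cong\calE_{\mathbf 1}$; and since $\operatorname{diag}(\id_{N_\bbC},c)\in GL(E)$ identifies $\calE_{\mathbf 1}$ with $\calE_{c\mathbf 1}$ for every $c\in\bbC^\ast$, we may rescale to $c=-1$ and obtain exactly the filtrations in the statement.

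\textbf{Main obstacle.} Steps 1 and 2 are essentially formal. The real content is Step 3 — realizing $\Ext^1(\calT_X,\calO_X)$ via the explicit family $\calE_t$ and correctly locating the anticanonical class $c_1(-K_X)=c_1(\calO_X(\sum_\rho D_\rho))$, equivalently showing the twisting scalars $t_k$ may be taken equal and nonzero. This is where the Fano hypothesis (the shape $-K_X=\sum_\rho D_\rho$ of the anticanonical divisor) actually enters.
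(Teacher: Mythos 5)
Your architecture is genuinely different from the paper's and most of it is sound. The paper never touches $\Ext^1$ directly: following Wahl, it realizes $\calE$ geometrically as $\calT_{Y}(-\log X)|_X$ for $Y$ the cone over the anticanonical embedding of $X$, computes the fan of $Y$, and reads the filtrations off explicit equivariant local frames $\{\partial/\partial z_1,\ldots,\partial/\partial z_n, z\,\partial/\partial z\}$ via Lemma~\ref{lem:2.1}; the correct extension class is built in from the start. Your Steps 1 and 2 are fine as stated: linear reductivity of $T$ does give $\Ext^1_T(\calT_X,\calO_X)\cong\Ext^1(\calT_X,\calO_X)^T=\Ext^1(\calT_X,\calO_X)$ once linearizations are fixed, and exactness of Klyachko's equivalence (from vanishing of $H^1$ on the affine charts, applied to eigenspaces) does force the shape of the filtrations up to the scalars $t_k$. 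Your description of the isomorphisms between the $\calE_t$ and the resulting bijection $\bbC^{\Sigma(1)}/M_\bbC\to\Ext^1(\calT_X,\calO_X)$ is also correct (one can even check it is $\bbC$-linear via Baer sums).

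The gap is exactly where you flag it, and flagging it is not the same as closing it. Everything in the proposition reduces to the claim that under your bijection the class $c_1(\calT_X)$ corresponds to the coset of a \emph{nonzero constant} vector $t$; equivalently, that $t\mapsto[\calE_t]$ agrees, up to a global scalar, with the standard presentation $\bbC^{\Sigma(1)}/M_\bbC\cong\Pic(X)\otimes\bbC$, $e_\rho\mapsto[D_\rho]$. A priori you only know both sides are abstractly isomorphic vector spaces of dimension $m-n$; if the identification instead sent $c_1(\calO_X(\sum_\rho a_\rho D_\rho))$ to $[(b_\rho a_\rho)_\rho]$ for non-equal weights $b_\rho$, the canonical extension would be some $\calE_t$ with non-constant $t$ and the stated filtrations would be wrong in every linear coordinate system. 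The parenthetical ``checked by restricting to the invariant $\bbP^1$'s'' is a plausible strategy but is itself a nontrivial computation: you must describe the Klyachko data of $\calE_t|_C$ for each invariant curve $C=V(\tau)$, compute its class in $\Ext^1_C(\calT_X|_C,\calO_C)$ (where $\calT_X|_C$ splits into line bundles governed by the wall relation), and compare with the image of $c_1(\calT_X)$ under $H^1(X,\Omega^1_X)\to H^1(C,\Omega^1_X|_C)$ --- none of which is carried out. Until that (or an equivalent identification, e.g.\ via the Euler sequence $0\to\Cl(X)^\vee\otimes\calO_X\to\bigoplus_\rho\calO_X(D_\rho)\to\calT_X\to0$, or by matching your $\calE_{\mathbf 1}$ with the log-tangent-sheaf model) is done, the proof is incomplete at its central step. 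A minor correction: $-K_X=\sum_\rho D_\rho$ holds for every smooth toric variety, so this is not ``where the Fano hypothesis enters''; Fano-ness is used in the paper to make $-K_X$ very ample for the cone construction, and later for polystability, not for the shape of the anticanonical divisor.
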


Then we insert some K\"ahler geometry.  Since $P$ has its barycenter at the origin, it is known that $X$ admits a K\"ahler--Einstein metric; see \cite{WZ04,Mab87}.  A theorem of Tian \cite{Tia92} then implies that the canonical extension $\calE$ admits a Hermitian--Einstein metric, and by the easy direction of the Donaldson--Uhlenbeck--Yau theorem, this implies the slope polystability of $\calE$ with respect to $\calO_X(-K_X)$.

Combining this with Proposition B and a proposition in \cite{HNS19} which characterizes the stability of toric vector bundles, we prove the affine subspace concentration conditions in Section~\ref{sec:ASCC}.  }

\subsection*{Acknowledgments}
I would also like to thank my advisor Julius Ross for setting this project and for discussions about it.

\section{Preliminaries}
{ Here we list some definitions and facts regarding toric varieties and toric vector bundles that we will use in this article.  One may refer to \cite{CLS11,Ful93} for more details about toric varieties and to \cite{Kly90,Per04,Pay08} for more details about toric vector bundles.  }
\subsection{Toric varieties}

We work throughout over $\bbC$.  By a toric variety, we mean an irreducible and normal algebraic variety $X$ containing a torus $T\cong(\bbC^*)^n$ as a Zariski open subset such that the action of $T$ on itself (by multiplication) extends to an algebraic action of $T$ on $X$.

Let $M$ be the group of characters of $T$ and $N$ the group of $1$-parameter subgroups of $T$.  Both $M$ and $N$ are lattices of rank $n$ (equal to the dimension of $T$), \textit{i.e.}, isomorphic to $\bbZ^n$, and they are dual to each other.  Explicitly, the character corresponding to $u=(a_1,\ldots,a_n)\in\bbZ^n$ is given by
$$
\chi^u\colon T\lra\bbC^*,\quad
(t_1,\ldots,t_n)\longmapsto t_1^{a_1}\cdots t_n^{a_n},
$$
and the $1$-parameter subgroup corresponding to $v=(b_1,\ldots,b_n)\in\bbZ^n$ is given by
$$
\gl^v\colon\bbC^*\lra T,\quad
t\longmapsto (t^{b_1},\ldots,t^{b_n}).
$$
The pairing of $M$ and $N$ is denoted by $\langle\cdot,\cdot\rangle\colon M\times N\to\bbZ$.  We have $\chi^u(\gl^v(t))=t^{\langle u,v\rangle}$ for all $u\in M$ and $v\in N$.

Every toric variety $X$ is associated to a fan $\gD$ in $N_{\mathbb{R}}:=N\otimes_{\mathbb{Z}}\mathbb{R}\,(\cong\bbR^n)$.  Each cone $\gs$ corresponds to a $T$-invariant open affine subset of $X$, denoted by $U_{\gs}$.  One has $U_{\gs} \cong \Spec \bbC[\gs^{\vee} \cap M]$, where $\gs^{\vee}$ is the dual cone of $\gs$ in $M_{\mathbb{R}} := M\otimes_{\mathbb{Z}}\mathbb{R}$

A fan $\gD$ is said to be \textit{complete} if it supports on the whole $N_{\bbR}$ and is said to be \textit{smooth} if every cone in $\gD$ is generated by a subset of a $\bbZ$-basis of $N$.  A toric variety $X$ is complete if and only if its associated fan $\gD$ is complete, and $X$ is smooth if and only if $\gD$ is smooth.

There is an inclusion-reversing bijection between the cones $\sigma\in\Delta$ and the $T$-orbits in $X$.  Let $O_{\sigma}$ be the orbit corresponding to $\sigma$.  The dimension of $\sigma$ is equal to the codimension of $O_{\sigma}$ in $X$.

Fix a cone $\gs\in\gD$, and let $v$ be a lattice point in the relative interior of $\gs$.  The $1$-parameter subgroup $\gl^v$ approaches a point $p_{\gs}$ in the orbit $O_{\gs}$ as $t\to 0$.  This point $p_{\gs}$ is called the \textit{distinguished point} corresponding to $\gs$.

Given a $1$-dimensional cone $\rho\in\gD$, the closure of $O_{\rho}\subseteq X$ is a Weil divisor, which we denote by $D_{\rho}$.  Suppose $X$ is smooth.  The canonical divisor $K_X$ of $X$ is equal to $-\sum_{\rho}D_{\rho}$, where the sum is taken over all $1$-dimensional cones $\rho\in\gD$.

\subsection{Polytopes and toric varieties}

Let $M_{\bbR}:=M\otimes_{\bbZ}\bbR\cong\bbR^n$.  A lattice polytope $P$ in $M_{\bbR}$ is the convex hull in $M_{\bbR}$ of finitely many points in~$M$.  The dimension of $P$ is the dimension of the affine span of $P$.  When $\dim P=\dim M_{\bbR}$, we say that $P$ is full-dimensional.

Let $P\subseteq M_{\bbR}$ be a full-dimensional lattice polytope, and let $P_1,\ldots,P_m$ be the \textit{facets} of $P$, \textit{i.e.}, codimension $1$ faces of $P$.  For each facet $P_k$, there exist a unique primitive lattice point $v_k\in N$ and a unique integer $c_k\in\bbZ$ such that
$$
P_k = \{u\in P\,|\,\langle u,v_k\rangle =-c_k\}
$$
and $\langle u,v_k\rangle\geq -c_k$ for all $u\in P$.

Denote the (inner) normal fan of $P$ by $\gD_P$.  The $1$-dimensional cones in $\gD_P$ are exactly the rays generated by $v_k$.  The toric variety $X_{\gD_P}$ associated to $\gD_P$ is called the toric variety of $P$ and denoted by $X_P$.  Let $D_k$ be the divisor corresponding to the $1$-dimensional cone generated by $v_k$.  Then we may define a divisor on $X$ by $D_P:=\sum_{k=1}^mc_kD_k$.  Such a divisor $D_P$ is necessarily ample.

This process is reversible, and there is a 1-to-1 correspondence between full-dimensional lattice polytopes $P\subseteq M_{\bbR}$ and  pairs $(X,D)$ of a complete toric variety $X$ together with an ample $T$-invariant divisor $D$ on $X$.

Fix a vertex $u$ of $P$.  Let $u_1,\ldots,u_n$ be the next lattice points on the $n$ edges (\textit{i.e.}, $1$-dimensional faces of $P$) containing $u$.  We say that $P$ is \textit{smooth} if $\{u_1-u,\ldots,u_n-u\}$ is a $\bbZ$-basis of $M$ for all vertices $u$ of $P$.  The toric variety $X_P$ of $P$ is smooth if and only if $P$ is smooth.

A polytope $P$ is called \textit{reflexive} if the integers $c_k$ ($k=1,\ldots,m$) defined above are all equal to $1$, and a smooth variety $X$ is said to be \textit{Fano} if its anticanonical divisor $-K_X$ is ample.  A reflexive and smooth polytope $P$ corresponds to a smooth Fano toric variety $X=X_P$ together with its anticanonical divisor $-K_X$.

\subsection{Toric vector bundles}

A vector bundle $\pi\colon \mathcal{E}\to X$ over a toric variety $X=X_{\Delta}$ is said to be toric (or equivariant) if there is a $T$-action on $\mathcal{E}$ such that $t\circ\pi=\pi\circ t$ for all $t\in T$.

Let $\mathcal{E}\to X$ be a toric vector bundle.  Fix a cone $\gs\in\gD$, and let $U_{\gs}\subseteq X$ be the corresponding open affine subset.  The $T$-action on $\mathcal{E}$ induces a $T$-action on sections given by
$$
(t\cdot s)(p):=t\cdot (s(t^{-1}\cdot p)),
$$
where $t\in T$, $p\in U_{\gs}$, and $s\in\gC(U_{\gs},\mathcal{E})$.  A section $s\in\gC(U_{\gs},\mathcal{E})$ is called a $T$-\textit{eigensection} if there exists a $u\in M$ such that $t\cdot s=\chi^u(t)\cdot s$ for all $t\in T$.  For each $u\in M$, let $\gC(U_{\gs},\mathcal{E})_u$ be the set of $T$-eigensections $s$ with $t\cdot s=\chi^u(t)\cdot s$.

Let $t_0\in T$ be the identity and $E=\mathcal{E}_{t_0}$ the fiber over $t_0$.  The evaluation at $t_0$ defines an injection $\gC(U_{\gs},\mathcal{E})_u\hookrightarrow E$ for each $u\in M$.  Define $E^{\gs}_u\subseteq E$ to be the image of $\gC(U_{\gs},\mathcal{E})_u$ under this injection.

Given a $u'\in M$ that lies in the dual of $\gs$, defined by $\gs^{\vee}:=\{u\in M\,|\,\langle u,v\rangle\geq 0\text{ for all }v\in\gs\}$, the character $\chi^{u'}$ of $T$ extends to a regular function on $U_{\gs}$.  Then, the multiplication by $\chi^{u'}$ gives a map $\gC(U_{\gs},\mathcal{E})_u\to \gC(U_{\gs},\mathcal{E})_{u-u'}$ and in turn induces an inclusion $E^{\gs}_u\subseteq E^{\gs}_{u-u'}$.
If $u'$ further satisfies $\langle u,v\rangle=0$ for all $v\in \gs$ (\textit{i.e.}, $u'\in\gs^{\perp}$), then $-u'$ also lies in $\gs^{\vee}$, and we obtain $E^{\gs}_u=E^{\gs}_{u-u'}$.  Therefore, $E^{\gs}_u$ depends only on the class of $u$ in $M_{\gs}:=M/(M\cap\gs^{\perp})$.

Let $\rho$ be a $1$-dimensional cone in $\gD$.  In this case, $M_{\rho}$ is isomorphic to $\bbZ$.  Let $v_{\rho}\in N$ be the primitive generator of $\rho$.  For each $i\in\bbZ$, take one $u_i\in M$ such that $\langle u_i,v_{\rho}\rangle=i$, and define $E^{\rho}(i):=E^{\rho}_{u_i}$.  Then, we get a decreasing filtration of $E$
$$
\cdots\supseteq E^{\rho}(i)\supseteq E^{\rho}(i-1)\supseteq\cdots.
$$

These filtrations corresponding to the $1$-dimensional cones in fact contain all information about a toric vector bundle according to the following classification theorem of Klyachko.

\begin{Kly}[\textit{cf.}~\protect{\cite[Theorem 2.2.1]{Kly90}}] 
The category of toric vector bundles on a toric variety $X=X(\gD)$ is naturally equivalent to the category of finite-dimensional vector spaces $E$ with collections of decreasing filtrations $\{E^{\rho}(i)\}$ indexed by the $1$-dimensional cones in $\gD$, satisfying the following compatibility condition:  for each cone $\gs\in\gD$, there is a decomposition $E=\bigoplus_{[u]\in M_{\gs}}E_{[u]}$ such that
	$$
	E^{\rho}(i)=\sum_{[u]:\langle u,v_p\rangle\geq i}E_{[u]}
	$$
	for all $\rho\preceq\gs$ and $i\in\bbZ$.
\end{Kly}

Another way to consider a toric vector bundle $\calE$ is to restrict it to open affine sets.  To discuss this, let~$\gs$ be a cone in $\gD$ and $U_{\gs}\subseteq X$ the corresponding open affine set.  On $U_{\gs}$, the toric vector bundle $\calE$ splits equivariantly into a direct sum of toric line bundles whose underlying line bundles are trivial; see \cite[Proposition 2.2]{Pay08}.  Collecting non-vanishing sections of these toric line bundles, one can get a local frame consisting of $T$-eigensections.

\begin{lem} \label{lem:2.1}
Let $\rho$ be a $1$-dimensional cone contained in $\gs$ and $v_{\rho}\in \rho\cap N$ the primitive generator of\, $\rho$.  Suppose that $\calE$ has a local frame $\{s_1,\ldots,s_r\}$ on $U_{\gs}$ such that $t\cdot s_k=\chi^{u_k}(t)\cdot s_k$ for some $u_1,\ldots,u_r\in M$, and define $L_k:=\Span\{s_k(t_0)\}\subseteq E$ for each $k=1,\ldots,r$.  Then, we have
	$$
	E^{\rho}(i)
	=
	\sum_{k:\langle u_k,v_{\rho}\rangle\geq i}
	L_k.
	$$
\end{lem}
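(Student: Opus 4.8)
The plan is to pass to a frame defined on $U_{\rho}$ itself and then read $E^{\rho}(i)$ straight off the definition of the sets $E^{\rho}_{u}$. First I would note that, in order for $E^{\rho}(i)$ to be defined, $\rho$ must be a $1$-dimensional cone of $\gD$; since it is contained in the cone $\gs\in\gD$, it is automatically a face of $\gs$, because $\rho=\rho\cap\gs$ is a common face of $\rho$ and $\gs$. Hence $U_{\rho}$ is a $T$-invariant open subset of $U_{\gs}$, and, restriction being $T$-equivariant, $\{s_1|_{U_{\rho}},\dots,s_r|_{U_{\rho}}\}$ is again a frame of $\calE$ consisting of $T$-eigensections with the same weights $u_k$. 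So I may assume from the start that $\gs=\rho$, in which case what has to be computed is $E^{\rho}(i)=E^{\rho}_{u_i}$, the image in $E$ of $\gC(U_{\rho},\calE)_{u_i}$ under evaluation at $t_0$, for any fixed $u_i\in M$ with $\langle u_i,v_{\rho}\rangle=i$.

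Next I would identify the weight-$u$ eigensections over $U_{\rho}$. As $\{s_k\}$ is a frame, $\gC(U_{\rho},\calE)$ is the free $\bbC[\rho^{\vee}\cap M]$-module generated by $s_1,\dots,s_r$, and a short manipulation of the action formula $(t\cdot s)(p)=t\cdot\bigl(s(t^{-1}p)\bigr)$ shows that for $w\in\rho^{\vee}\cap M$ the section $\chi^{w}s_k$ is a $T$-eigensection of weight $u_k-w$. Decomposing an arbitrary section $\sum_k f_k s_k$ into its weight components then shows that $\gC(U_{\rho},\calE)_{u}$ is spanned by those $\chi^{u_k-u}s_k$ with $u_k-u\in\rho^{\vee}\cap M$; since $u_k,u\in M$ and $\rho^{\vee}=\{w:\langle w,v_{\rho}\rangle\ge0\}$, this condition is just $\langle u_k,v_{\rho}\rangle\ge\langle u,v_{\rho}\rangle$. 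Evaluating at the identity $t_0$ makes the character factor disappear, so $(\chi^{u_k-u}s_k)(t_0)=s_k(t_0)$, and hence $E^{\rho}_{u}=\sum_{k\,:\,\langle u_k,v_{\rho}\rangle\ge\langle u,v_{\rho}\rangle}L_k$. Taking $u=u_i$ yields the asserted formula.

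I do not expect a genuine obstacle: the computation is essentially bookkeeping. The two places that need a little care are the sign in ``$\chi^{w}s_k$ has weight $u_k-w$'' (not $u_k+w$), and the observation that passing to the face $\rho\preceq\gs$ yields the \emph{smaller} affine chart $U_{\rho}\subseteq U_{\gs}$ — which is precisely what makes it legitimate to restrict the given frame to $U_{\rho}$.
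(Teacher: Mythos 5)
Your argument is correct, and every step checks out: $\rho$ is indeed a face of $\gs$ (the intersection $\rho\cap\gs=\rho$ is a common face by the fan axioms), so $U_{\rho}\subseteq U_{\gs}$ and the restricted frame is still an eigen-frame; the weight computation $(t\cdot(\chi^{w}s_k))=\chi^{u_k-w}(t)\,\chi^{w}s_k$ has the right sign; and evaluating at the identity kills the character factor, giving $E^{\rho}_{u}=\sum_{k:\langle u_k,v_{\rho}\rangle\geq\langle u,v_{\rho}\rangle}L_k$ as claimed. The route is, however, more elementary than the paper's. The paper splits $\calE|_{U_{\gs}}$ equivariantly as $\bigoplus_k\calL_k$, where $\calL_k$ is the trivial line bundle spanned by $s_k$ with its induced $T$-action, quotes Klyachko's classification to read off the filtration of each one-dimensional space $L_k$ (namely $(L_k)^{\rho}(i)=L_k$ for $i\leq\langle u_k,v_{\rho}\rangle$ and $0$ otherwise), and sums these filtrations. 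You instead unwind the definition of $E^{\rho}(i)=E^{\rho}_{u_i}$ directly: you identify $\gC(U_{\rho},\calE)$ as the free $\bbC[\rho^{\vee}\cap M]$-module on the $s_k$, decompose it into weight spaces, and evaluate at $t_0$. The two proofs rest on the same bookkeeping, but yours is self-contained where the paper's delegates the line-bundle case to the classification theorem; conversely, the paper's version is shorter and makes the reduction to rank one explicit. Either is acceptable; yours has the minor advantage of not presupposing the (easy but unproved in the paper) description of the Klyachko filtration of a trivialized equivariant line bundle.
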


\begin{proof}
By assumption, we have
	$$
	\calE|_{U_{\gs}}
	\cong
	\bigoplus_{k=1}^r\calL_k,
	$$
where $\calL_k$ is the trivial line bundle $U_{\gs}\times\bbC$ spanned by $s_k$ equipped with the $T$-action defined by
	$$
	t\cdot (p,s_k(p))=(t\cdot p,\chi^u(t)\cdot s_k(p)).
	$$
By Klyachko's classification theorem, each toric line bundle $\calL_k$ on $U_{\gs}$ also corresponds to a collection of filtrations of $L_k$ indexed by the $1$-dimensional cones contained in $\gs$.  The filtration corresponding to $\rho$ is given by
	$$
	(L_k)^{\rho}(i)
	=
	\begin{cases}
		L_k,	&	i\leq \langle u_k,v_{\rho}\rangle,	\\
		0,	&	i>\langle u_k,v_{\rho}\rangle.
	\end{cases}
	$$
Then, adding these filtrations together, we obtain the desired filtration of $E$.
\end{proof}

\section{The canonical extension} \label{sec:calE}
{ { Let $X$ be a smooth Fano variety of dimension $n$.  The first Chern class $c_1(X)$ of $X$ lies in the $(1,1)$-Dolbeault cohomology $H^{1,1}(X)$, which is naturally isomorphic to the sheaf cohomology $H^1(X,\gO^1_X)$ by Dolbeault's theorem.  The cotangent bundle $\gO^1_X$ is the dual of the tangent bundle $\calT_X$, so we have
$$
H^1(X,\gO^1_X)=H^1(X,\calT_X^*)
\cong
\Ext^1(\calO_X,\calT_X^*)
\cong
\Ext^1(\calT_X,\calO_X)
$$
by for example \cite[Propositions 6.3 and 6.7]{Har77}.  Thus, $c_1(X)$ can be viewed as an extension class in $\Ext^1(\calT_X,\calO_X)$ and in turn corresponds to an extension $\calE$ of the tangent bundle $\calT_X$ by the structure sheaf $\calO_X$ given by
$$
0\lra\calO_X\lra\calE\lra\calT_X\lra 0.
$$

In this section, we will show that if $X$ is toric, then $\calE$ can be made a toric vector bundle, and we will compute its corresponding filtrations in the sense of Klyachko's classification theorem of toric vector bundles.  }

{ Let $\gD_X$ be the corresponding fan of $X$ in $N_{\bbR}\cong\bbR^n$.  Denote the $1$-dimensional cones in $\gD_X$ by $\rho_1,\ldots,\rho_m$, and let $v_1,\ldots,v_m\in N$ be the primitive generators of these $1$-dimensional cones, respectively.  }

\begin{prop} \label{prop:fltrE}
There is a $T$-action on the extension $\calE$ that makes it a toric vector bundle whose corresponding filtrations $E^{\rho}(i)$ of the $(n+1)$-dimensional\, $\bbC$-vector space $E:=N_{\bbC}\oplus\bbC\cong\bbC^{n+1}$ are given by
	$$
	E^{\rho_k}(i)
	=
	\begin{cases}
		E,							&	i\leq 0,	\\
		\Span_{\bbC}\{(v_k,-1)\},	&	i=1,		\\
		0,							&	i>2.
	\end{cases}
	$$
\end{prop}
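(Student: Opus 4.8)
The plan is to build the toric vector bundle directly from the filtrations in the statement and then to check that, as a plain sheaf, it coincides with the canonical extension $\calE$; the $T$-action is then transported along that isomorphism. (One could instead start from $\calE$ itself: since $c_1(\calT_X) = c_1(\calO_X(-K_X)) = \sum_k[D_k]$ is represented by a $T$-invariant divisor it lies in $\Ext^1_T(\calT_X,\calO_X) = \Ext^1(\calT_X,\calO_X)^T$, so $\calE$ carries an equivariant structure compatible with the standard ones on $\calO_X$ and $\calT_X$; but one is then left with the same computation of filtrations.) I would take two facts for granted: the Klyachko data of the outer terms, namely $(\calO_X)^{\rho_k}(i) = \bbC$ for $i\le 0$ and $0$ for $i\ge 1$, and $(\calT_X)^{\rho_k}(i) = N_\bbC$ for $i\le 0$, $(\calT_X)^{\rho_k}(1) = \bbC v_k$, and $(\calT_X)^{\rho_k}(i) = 0$ for $i\ge 2$. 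Both follow from Lemma~\ref{lem:2.1} applied on $U_{\rho_k}$ — for $\calT_X$ using the eigenframe $\partial_{y_1},\, y_2\partial_{y_2},\,\dots,\,y_n\partial_{y_n}$ in coordinates $y_j = \chi^{u^{(j)}}$ dual to a $\bbZ$-basis $v_k = v^{(1)},\dots,v^{(n)}$ of $N$ — and are standard (\cite{Kly90,Pay08}).

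First I would check that the prescribed filtrations of $E := N_\bbC\oplus\bbC$ satisfy Klyachko's compatibility condition, so that they define a rank $n+1$ toric vector bundle $\widetilde\calE$. Fix a cone $\sigma$; by smoothness its primitive generators $v_{i_1},\dots,v_{i_d}$ extend to a $\bbZ$-basis $v_{i_1},\dots,v_{i_n}$ of $N$, with dual basis $u^{(1)},\dots,u^{(n)}$ of $M$. Setting $E_{[0]} := \Span_\bbC\{(v_{i_{d+1}},0),\dots,(v_{i_n},0),(0,1)\}$, $E_{[u^{(j)}]} := \Span_\bbC\{(v_{i_j},-1)\}$ for $1\le j\le d$, and $E_{[u]} := 0$ for the remaining classes of $M_\sigma$, one gets a direct-sum decomposition $E = \bigoplus_{[u]\in M_\sigma} E_{[u]}$ (the $n+1$ listed vectors are linearly independent since their $N_\bbC$-components involve the full basis), and $\langle u^{(j)},v_{i_l}\rangle = \delta_{jl}$ makes it immediate that $E^{\rho_{i_l}}(i) = \sum_{[u]:\,\langle u,v_{i_l}\rangle\ge i} E_{[u]}$ for every ray $\rho_{i_l}\preceq\sigma$ and every $i\in\bbZ$. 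This is the required compatibility.

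Next I would produce the extension. The line $L := \bbC(0,1)\subseteq E$, endowed with the induced filtrations $L\cap E^{\rho_k}(i)$, carries exactly the Klyachko data of $\calO_X$ — the one thing to note is that $(v_k,-1)\notin L$ for all $k$, so $L\cap E^{\rho_k}(1) = 0$ — while $E/L\cong N_\bbC$, with its induced filtrations, carries exactly the data of $\calT_X$ recalled above. Since both of these induced data sets again satisfy Klyachko's compatibility condition, the corresponding sheaf sequence $0\to\calO_X\to\widetilde\calE\to\calT_X\to 0$ is exact. Granting that its extension class in $\Ext^1(\calT_X,\calO_X)\cong H^1(X,\gO^1_X)$ equals $c_1(\calT_X) = \sum_{k=1}^m[D_k]$, we get $\widetilde\calE\cong\calE$ and the filtrations in the statement hold by construction.

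The main obstacle is that last identification of the extension class. I would compute a \v{C}ech representative: $\widetilde\calE$ splits equivariantly over each affine chart $U_\sigma$ (this is precisely the decomposition of the previous paragraph), so after choosing equivariant splittings $\phi_\sigma\colon\calT_X|_{U_\sigma}\to\widetilde\calE|_{U_\sigma}$ the differences $\phi_\sigma-\phi_{\sigma'}\in\Gamma(U_\sigma\cap U_{\sigma'},\gO^1_X)$ form a $1$-cocycle representing the class. Writing out the eigenframes of $\widetilde\calE|_{U_\sigma}$ (a weight-$0$ section with value $(0,1)$, and sections of weight $u^{(j)}$ with value $(v_{i_j},-1)$) and using the coordinate change between two maximal cones, one checks that this cocycle is cohomologous to the standard cocycle $\{d\log\chi^{m_{\sigma'}-m_\sigma}\}$ of $\calO_X(-K_X)$, where $m_\sigma\in M$ satisfies $\langle m_\sigma,v_{i_j}\rangle=-1$ on the rays of $\sigma$ — the scalar $-1$ in $(v_k,-1)$ being exactly the record that every facet coefficient of $-K_X=\sum_k D_k$ is $1$. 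A more structural variant of this step: the assignment sending a tuple of ``heights'' $(a_k)_k$ — i.e.\ an equivariant extension with $E^{\rho_k}(1)=\bbC(v_k,a_k)$ — to its extension class descends to a natural isomorphism $\bbC^m/M_\bbC \xrightarrow{\ \sim\ } \operatorname{Cl}(X)\otimes_\bbZ\bbC$, so it suffices to match $(a_k)=(-1,\dots,-1)$ with $\pm(-K_X)$ and fix the sign by one normalization (e.g.\ the Euler sequence on $\bbP^n$). Everything outside this comparison is bookkeeping with Klyachko's dictionary.
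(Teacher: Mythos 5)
Your route is genuinely different from the paper's. The paper realizes $\calE$ geometrically: it builds the cone $Y$ over the anticanonical embedding of $X$ as a toric variety for $\tilde{T}=T\times\bbC^*$, invokes Wahl's theorem \cite{Wah76} to identify $\calE$ with $\calT_Y(-\log X)|_X$, and reads off the filtrations from the explicit equivariant frames $\{\partial/\partial z_l,\,z\,\partial/\partial z\}$; the identification of the extension class with $c_1(\calT_X)$ is thereby outsourced to Wahl. You instead build a bundle $\widetilde\calE$ purely from the Klyachko data, which is cleaner and avoids the cone and the computation of its fan entirely, but you must then prove by hand that its extension class is $c_1(\calT_X)$ --- and that is exactly where the content of the proposition sits. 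Your compatibility check and the identification of sub and quotient are correct; one small point is that to get exactness of $0\to\calO_X\to\widetilde\calE\to\calT_X\to 0$ as a sequence of bundles (not merely that $L$ and $E/L$ carry the right filtrations), you should note that $L=\bbC(0,1)$ lies inside the summand $E_{[0]}$ of every one of your chartwise decompositions, so the inclusion is equivariantly locally split; ``both induced data sets satisfy compatibility'' is not by itself the reason.

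The \v{C}ech computation you outline does close the gap, but be aware it is not lighter than what it replaces: expressing the weight-$u^{(j)}$ eigensection with value $(v_{i_j},-1)$ in the primed frame produces exactly the integers $b_l$ with $u'=u+\sum_l b_l u_{k_l}$ that drive the change-of-basis verification in the paper's proof, and these become the coefficients of $d\log\chi^{m_{\sigma'}-m_\sigma}$. Two caveats on the ``structural variant'': the assertion that $(a_k)\mapsto$ extension class induces an isomorphism $\bbC^m/M_\bbC\to\Ext^1(\calT_X,\calO_X)$ is itself a nontrivial claim requiring proof (linearity in $(a_k)$ and that the split class corresponds precisely to $(a_k)\in M_\bbC$), and the proposed sign normalization via the Euler sequence on $\bbP^n$ cannot work as stated: both $c_1$ and $-c_1$ are nonzero classes whose extensions have isomorphic middle terms, so non-splitness of $\calO(1)^{\oplus(n+1)}$ does not distinguish them. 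The sign is ultimately immaterial --- extensions with classes $\xi$ and $\lambda\xi$ ($\lambda\neq 0$) have isomorphic middle terms, so $\widetilde\calE\cong\calE$ as sheaves either way and the $T$-action transports across that isomorphism, which is all the proposition asserts --- but you should say this explicitly rather than lean on a normalization that cannot determine the sign.
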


{
To prove this, we will use the following construction of the extension $\calE$.  First, embed $X$ into a projective space $\bbP^N$ with the very ample divisor $c_1(\calT_X)=-K_X$.  Next, further embed $\bbP^N$ into $\bbP^{N+1}$.  Fix a point $\tilde{p}_0\in\bbP^{N+1}\backslash\bbP^N$, and let $Y\subseteq\bbP^{N+1}$ be the cone over $X$ with vertex $\tilde{p}_0$.  Then $\calE$ will be exactly  the restriction of the logarithmic tangent sheaf $\calT_Y(-\log X)$ of $Y$ to $X$.  A proof of this can be found in \cite[Proposition~3.3]{Wah76}.

In the following, we will first construct $Y$ as a toric variety and get an induced toric action on the logarithmic tangent sheaf.  Then, we will make $\calE$ a toric vector bundle.  } }

\subsection{The logarithmic tangent sheaf}

{ Since $X$ is a smooth and Fano toric variety, its anticanonical divisor $-K_X$ is very ample.  Let $P$ be the polytope corresponding to $(X,-K_X)$.  Then the vector space $\gC(X,\calO_X(-K_X))$ of the global sections of $\calO_X(-K_X)$ is spanned by the characters corresponding to the lattice points in $P$, \textit{i.e.}, $\gC(X,-K_X)=\bigoplus_{u\in P\cap M}\bbC\cdot\chi^u$.

Let $N=\dim\gC(X,-K_X)-1$.  Then the embedding $X\hookrightarrow\bbP^N$ can be explicitly given by $x\mapsto [\chi^u(x)]_{u\in P\cap M}$.  In particular, the restriction of the embedding to the torus $T\subseteq X$ is given by $(t_1,\ldots,t_n)\mapsto [\chi^u(t_1,\ldots,t_n)]_{u\in P\cap M}$.

Without loss of generality, we may assume that the embedding of $\bbP^N$ in $\bbP^{N+1}$ is given by
$$
[Z_0:\cdots:Z_N]\longmapsto[Z_0:\cdots:Z_N:0]
$$
so that $\bbP^N$ is identified with the hyperplane in $\bbP^{N+1}$ where the last coordinate vanishes.  Also, we may set $\tilde{p}_0=[0:\cdots:0:1]$.  Then the cone over $X\subseteq\bbP^N$ with vertex $\tilde{p}_0$ is given by
$$
Y=\left\{[x:Z_{N+1}]\in\bbP^{N+1}\,\big|\,
x\in X,Z_{N+1}\in\bbC\right\}\cup\{\tilde{p}_0\}.
$$
There is a dense $(n+1)$-dimensional torus $\tilde{T}:=T\times\bbC^*$ in $Y$, and we may let it act on $Y$ by
$$
(t_1,\ldots,t_n,t_{n+1})\cdot[x:Z_{N+1}]:=
[(t_1,\ldots,t_n)\cdot x:t_{n+1}Z_{N+1}].
$$
We have now made $Y$ a toric variety.  }

{ We will denote the character lattice of $\tilde{T}$ by $\tilde{M}\cong M\oplus\bbZ$.  We also let $\tilde{N}$ be the dual lattice of $\tilde{M}$ and define $\tilde{N}_{\bbR}:=\tilde{N}\otimes_{\bbZ}\bbR$.  }

\begin{lem} \label{lem:fan of Y}
The fan $\gD_Y$ corresponding to $Y$ is one in $\tilde{N}_{\bbR} \cong N_{\bbR}\oplus\bbR \cong \bbR^{n+1}$ that contains exactly the following cones: 
	\begin{enumerate}
		\item the origin, and the $1$-dimensional cone generated by $(0 ,\ldots , 0 , 1)$;
		\item for each subset $\{v_{k_1} ,\ldots, v_{k_d}\} \subseteq \{v_1 ,\ldots , v_m\}$ that generates a cone $\gs$ in the fan $\gD_X$ of $X$, a $d$-dimensional cone $\gs_{-}$ generated by $\{ (v_{k_l} , -1) \, | \, l = 1 ,\ldots, d \}$, and a $(d + 1)$-dimensional cone $\gs_{+}$ generated by $\{ (v_{k_l} , -1) \, | \,$ $l = 1 ,\ldots, d \} \cup \{(0 ,\ldots, 0 , 1)\}$;
		\item the $(n + 1)$-dimensional cone generated by $\{ (v_k , -1) \, | \, k = 1 ,\ldots, m \}$.
	\end{enumerate}
\end{lem}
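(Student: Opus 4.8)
The plan is to exhibit an explicit $\tilde{T}$-invariant affine open cover of $Y$, compute the cone dual to each chart, and recover $\gD_Y$ as the set of all faces of these finitely many cones. Since $P$ is reflexive, $-K_X=\sum_kD_k$ and $P=\{u\in M_{\bbR}:\langle u,v_k\rangle\ge-1\text{ for all }k\}$, and to each maximal cone $\gs=\Cone(v_{k_1},\dots,v_{k_n})\in\gD_X$ there corresponds its dual vertex $u_\gs\in P\cap M$, with $\langle u_\gs,v_{k_l}\rangle=-1$ for all $l$. As $X$ is smooth and Fano, $-K_X$ is very ample, and the non-vanishing locus of the section $\chi^{u_\gs}$ of $\calO_X(-K_X)$ is exactly $U_\gs$, so $X\cap\{Z_{u_\gs}\ne0\}=U_\gs$. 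The cover I would use is $Y=\hat{X}\cup\bigcup_{\gs\text{ maximal}}V_\gs$ with $\hat{X}:=Y\cap\{Z_{N+1}\ne0\}$ the affine cone over $X\hookrightarrow\bbP^N$ (coordinate ring $\bigoplus_{k\ge0}H^0(X,-kK_X)$) and $V_\gs:=Y\cap\{Z_{u_\gs}\ne0\}$; indeed $Y\setminus\hat{X}$ is the copy of $X$ at $Z_{N+1}=0$, each $V_\gs$ contains that copy's chart $U_\gs$, and the $U_\gs$ cover $X$.

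Next I would compute the two kinds of cones using the characters $\tilde\chi^{(u,j)}(t,s)=\chi^u(t)\,s^j$ of $\tilde T=T\times\bbC^*$. For $\hat{X}$: its coordinate ring is $\bigoplus_{k\ge0}\bigoplus_{u\in kP\cap M}\bbC\,\tilde\chi^{(u,-k)}$, so $\hat{X}=U_{\hat\gs}$ with $\hat\gs=\Cone\{(v_1,-1),\dots,(v_m,-1),(0,-1)\}$; here one uses that completeness of $\gD_X$ yields a nontrivial relation $\sum_jt_jv_j=0$ with all $t_j\ge0$ and $\sum_jt_j>0$, whence $(0,-1)\in\Cone\{(v_j,-1)\}$ and the generator $(0,-1)$ is redundant, giving the $(n+1)$-dimensional cone of part (c). For $V_\gs$ with $\gs$ maximal: the functions $Z_u/Z_{u_\gs}=\tilde\chi^{(u-u_\gs,0)}$ $(u\in P\cap M)$ and $w:=Z_{N+1}/Z_{u_\gs}=\tilde\chi^{(-u_\gs,1)}$ generate the coordinate ring, and smoothness of $P$ at $u_\gs$ shows the first family generates the monoid $(\gs^\vee\cap M)\times\{0\}$, so $V_\gs=U_{\gs_+}$ with $\gs_+^\vee=(\gs^\vee\times\{0\})+\bbR_{\ge0}(-u_\gs,1)$. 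Dualizing, and using $\langle u_\gs,v_{k_l}\rangle=-1$, gives $\gs_+=\Cone\{(v_{k_1},-1),\dots,(v_{k_n},-1),(0,1)\}$, a smooth $(n+1)$-dimensional cone whose generators form a $\bbZ$-basis of $\tilde N$.

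Finally I would enumerate faces. The fan $\gD_Y$ is precisely the collection of faces of $\hat\gs$ and of the cones $\gs_+$ ($\gs$ maximal). Each $\gs_+$ is simplicial, so its faces are the cones $\Cone\{(v_{k_l},-1):l\in I\}$ and those same cones with $(0,1)$ adjoined, over $I\subseteq\{1,\dots,n\}$; by smoothness each $\{v_{k_l}:l\in I\}$ spans a cone $\tau\preceq\gs$ in $\gD_X$, so these faces are exactly the $\tau_-$ and $\tau_+$ of (b) together with the origin and the ray $\bbR_{\ge0}(0,\dots,0,1)$ of (a), and as $\gs$ runs over all maximal cones of $\gD_X$ one obtains $\tau_\pm$ for every $\tau\in\gD_X$. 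Dually, every proper face of $\hat\gs$ is cut out by some $(u,-k)\in\hat\gs^\vee$ with $k>0$; normalizing $u/k\in P$ it equals $\Cone\{(v_j,-1):u/k\in P_j\}=\tau_-$, where $\tau\in\gD_X$ is the cone dual to the face of $P$ whose relative interior contains $u/k$, while the only remaining face is $\hat\gs$ itself. Hence the cones of $\gD_Y$ are exactly those listed in (a)--(c). The step I expect to be the main obstacle is the computation of $\gs_+$: identifying the $\tilde T$-character of the auxiliary coordinate $Z_{N+1}/Z_{u_\gs}$ correctly (a sign slip is easy) and checking the generated monoid really is $\gs_+^\vee\cap\tilde M$; the completeness argument that removes $(0,-1)$ from the generators of $\hat\gs$ is short but is the one place requiring more than bookkeeping.
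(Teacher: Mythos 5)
Your proof is correct in substance, but it takes a genuinely different route from the paper. The paper argues via the orbit--cone correspondence: it lists the $\tilde{T}$-orbits of $Y$, identifies the distinguished point of each, and then computes limits $\lim_{t\to 0}g\bigl(\gl_{\tilde{T}}^{(v,j)}(t)\bigr)$ of one-parameter subgroups to decide which lattice points lie in the relative interior of which cone; the cone attached to each orbit is read off from these limits. You instead work on the coordinate-ring side: you produce the explicit $\tilde{T}$-invariant affine cover by $\hat{X}=Y\cap\{Z_{N+1}\neq 0\}$ and the charts $V_{\gs}=Y\cap\{Z_{u_{\gs}}\neq 0\}$, identify each chart as $\Spec$ of a semigroup algebra, dualize to get $\hat{\gs}$ and $\gs_+$, and then enumerate faces. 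Your computations check out: the character of $Z_{N+1}/Z_{u_{\gs}}$ is indeed $\tilde\chi^{(-u_{\gs},1)}=\tilde\chi^{(u_{k_1}+\cdots+u_{k_n},1)}$, dualizing gives exactly the paper's generators $(v_{k_l},-1),(0,\ldots,0,1)$, the completeness argument eliminating $(0,-1)$ from $\hat{\gs}$ is right, and the face enumeration (simplicial faces of $\gs_+$; faces of $\hat{\gs}$ cut out by $(u,-k)$ with $u/k\in P$, matching faces of $P$ with cones $\tau_-$) recovers precisely the list (a)--(c). Your approach has the advantage of actually proving the fan structure rather than just matching orbits to candidate cones, and it localizes the one real subtlety: your identification of $\hat{X}$ with $U_{\hat{\gs}}$, i.e.\ of its coordinate ring with all of $\bigoplus_{k\geq 0}H^0(X,-kK_X)$, is exactly projective normality of the anticanonical embedding (integral closedness of the smooth reflexive polytope $P$), which is a nontrivial input. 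This is not a defect relative to the paper --- the paper's statement that $Y$ is a toric variety with a fan already presupposes normality of $Y$ at the vertex, which is the same condition --- and it is harmless downstream since the vertex $\tilde{p}_0$ is discarded when passing to $Y'$; but if you want your write-up to be self-contained you should either cite this normality fact or phrase the vertex chart's contribution as describing the normalization.
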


\begin{proof}
First, to determine the 1-dimensional cones in $\gD_Y$, consider the $\tilde{T}$-orbits of codimension $1$ in $Y$, \textit{i.e.}, the $n$-dimensional orbits.  There are exactly $m+1$ of them, the torus $T$ in $X$ and $O_k\times\bbC^*$ ($k=1,\ldots,m$), where the $O_k$ are the $(n-1)$-dimensional $T$-orbits in $X$ corresponding to the $1$-dimensional cones $\rho_k$ in $\gD_X$.  For simplicity, define $\tilde{O}_k:=O_k\times\bbC^*$.
	
The distinguished points on $T$ and $\tilde{O}_k$ are, respectively, $[1:\cdots:1:0]$ and $[q_k:1]$, where $q_k\in X\subseteq\bbP^N$ is the distinguished point on $O_k$.
	
Now, consider the limits of the $1$-parameter subgroups of $\tilde{T}$ corresponding to the lattice points $(0,\ldots,0,1)$ and $(v_k,-1)$ $(k=1,\ldots,m)$ in $\tilde{N}$.  Let $f$ be the embedding $X\hookrightarrow\bbP^N$ and $g$ the composition of $f$ and the embedding $\bbP^N\hookrightarrow\bbP^{N+1}$.
	
First, we have
	$$
	\lim_{t\to 0} g\left(\gl_{\tilde{T}}^{(0,\ldots,0,1)}(t)\right)
	=\lim_{t\to 0} [1:\cdots:1:t]
	=[1:\cdots:1:0],
	$$
so that the $1$-dimensional cone corresponding to the orbit $T\subseteq Y$ is generated by $(0,\ldots,0,1)$.
	
Next, since $v_k$ generates the $1$-dimensional cone $\rho_k$ in $\gD_X$, we have
	$$
	q_k
	=\lim_{t\to 0} f\left(\gl_T^{v_k}(t)\right)
	=\lim_{t\to 0}\,\left[\chi^u\left(t^{v_k}\right)\right]_{u\in P\cap M}
	=\lim_{t\to 0}\,\left[t^{\langle u,v_k\rangle}\right]_{u\in P\cap M}.
	$$
Because $P$ is reflexive, we also have that
	$$
	\min_{u\in P\cap M}\{\langle u,v_k\rangle\}=-1.
	$$
Then, we obtain
	$$
	\lim_{t\to 0} g\left(\gl_{\tilde{T}}^{(v_k,-1)}(t)\right)
	=\lim_{t\to 0}\,\left[\left[\chi^u(t^{v_k})\right]_{u\in P\cap M}:t^{-1}\right]
	=\lim_{t\to 0}\,\left[\left[t^{\langle u,v_k\rangle}\right]_{u\in P\cap M}:t^{-1}\right]
	=[q_k:1],
	$$
which implies that $(v_k,-1)$ generates the $1$-dimensional cone corresponding to $\tilde{O}_k$.
	
More generally, fix $v_{k_1} ,\ldots, v_{k_d}$ ($d > 0$) that generate a cone $\gs \in \gD_X$.  Denote by $O_{\gs}$ the $T$-orbit in $X$ corresponding to $\gs$ and, by abuse of notation, its image in $Y$ under $X \hookrightarrow Y$.  Also, define $\tilde{O}_k:=O_k\times\bbC^* \subseteq Y$.  Clearly, $O_k , \tilde{O}_k$ are both $\tilde{T}$-orbits in $Y$.  The claim is that $O_k$ corresponds to the cone $\gs_{+}$ generated by $\{ (v_{k_l} , -1) \, | \, l = 1 ,\ldots, d \} \cup \{(0 ,\ldots, 0 , 1)\}$ and $\tilde{O}_k$ corresponds to the cone $\gs_{-}$ generated by $\{ (v_{k_l} , -1) \, | \, l = 1 ,\ldots, d \}$.
	
Define $v_{\gs} := v_{k_1} + \cdots + v_{k_d} \in N$.  Let $q_{\gs} \in X$ be the distinguished point on $O_k \subseteq X$, so that
	$$
q_{\sigma} = \lim_{t\to 0} f\left(\gl_T^{v_{\gs}}(t)\right) = \lim_{t\to 0}\,
\left[t^{\langle u,v_{\gs}\rangle}\right]_{u\in P\cap M}.
	$$
Then the distinguished points on $O_k , \tilde{O}_k \subseteq Y$ are, respectively, $[q_{\gs} : 0] , [q_{\gs} : 1]$.  Take lattice points $(v_{\gs} , 0) , (v_{\gs} , -d)$ lying in the relative interiors of $\gs_{+} , \gs_{-}$, respectively.  Then, we have
	$$
	\begin{dcases}
		\lim_{t\to 0} g(\gl_{\tilde{T}}^{(v_{\gs},0)}(t))
		=\lim_{t\to 0}\,\left[\left[t^{\langle u,v_{\gs}\rangle}\right]_{u\in P\cap M}:1\right]
		=[q_k:0],
		\\
		\lim_{t\to 0} g(\gl_{\tilde{T}}^{(v_{\gs},-d)}(t))
		=\lim_{t\to 0}\,\left[\left[t^{\langle u,v_{\gs}\rangle}\right]_{u\in P\cap M}:t^{-d}\right]
		=[q_k:1],
	\end{dcases}
	$$
where the last equality of each line is	because
	$$
	\min_{u\in P\cap M}\{\langle u,v_{\gs}\rangle\}=-d ,
	$$
where the minimum is attained on the face of $P$ corresponding to $\gs$.
	
Finally, the $(n + 1)$-dimensional cone generated by $\{ (v_k , -1) \, | \, k = 1 ,\ldots, m \}$ corresponds to the cone point $\tilde{p}_0 = [0 : \cdots : 0 : 1]$ as
	\begin{equation*}\pushQED{\qed} 
	\lim_{t\to 0} g\left(\gl_{\tilde{T}}^{(0,\ldots,0,-1)}(t)\right)
	=\lim_{t\to 0} [1:\cdots:1:t^{-1}]
	=[0 : \cdots : 0 : 1].\qedhere\popQED
	\end{equation*}
\renewcommand{\qed}{}   
\end{proof}

{
}

{ It is convenient to remove the vertex $\tilde{p}_0$ and consider $Y':=Y\backslash\{\tilde{p}_0\}$ instead of the cone $Y$.  Since $\tilde{p}_0$ is a fixed point under the $\tilde{T}$-action, $Y'$ is still a toric variety.  In addition, we know that the fan of $Y'$ is equal to $\gD_Y$ with the maximal cone
$$
\tilde{\gs}:=\Cone\left((v_1,-1),\ldots,(v_m,-1)\right)
$$
corresponding to the fixed point $\tilde{p}_0$ removed; \textit{i.e.}, $\gD_{Y'}=\gD_Y\backslash\{\tilde{\gs}\}$.  One can easily see that $Y'$ is smooth either by checking that the fan $\gD_{Y'}$ is smooth or by observing that $Y'$ is the total space of the line bundle $\calO_X(-K_X)$, which follows from Lemma~\ref{lem:fan of Y} together with \cite[Proposition 7.3.1]{CLS11}.

Then, instead of the logarithmic tangent sheaf $\calT_Y(-\log X)$ on $Y$, we consider its restriction on $Y'$, which is the same as $\calT_{Y'}(-\log X)$ since $\tilde{p}_0\notin X$.  Note that $\calT_{Y'}(-\log X)$ is then locally free since $Y'$ is smooth; we will view it as a vector bundle on $Y'$.  }

{ The toric action of $Y'$ induces a natural toric action on the vector bundle, which we describe as follows.

Fix an open affine set $\tilde{U}_j\subseteq Y'$ corresponding to a maximal cone $\tilde{\gs}_j$ in $\gD_{Y'}$.  The cone $\tilde{\gs}_j$ and its dual cone $(\gs_j)^{\vee}$ must be of the form
$$
\begin{cases}
	\tilde{\gs}_j
	=\Cone\left((v_{k_1},-1),\ldots,(v_{k_n},-1),(0,\ldots,0,1)\right),
	\\
	(\tilde{\gs}_j)^{\vee}
	=\Cone\left((u_{k_1},0),\ldots,(u_{k_n},0),(u_{k_1}+\cdots+u_{k_n},1)\right),
\end{cases}
$$
where $v_{k_1},\ldots,v_{k_n}\in N$ are generators of a maximal cone $\gs_j$ in $\gD_X$ and $u_{k_1},\ldots,u_{k_n}\in M$ are generators of its dual cone $(\gs_j)^{\vee}$ with
$$
\langle u_k,v_{k'} \rangle =
\begin{cases}
	1,	&	\text{if }	k=k',		\\
	0,	&	\text{if }	k\neq k'
\end{cases}
$$
for $k,k'\in\{k_1,\ldots,k_n\}$.  Note that $\tilde{U}_j$ is isomorphic to $\bbC^{n+1}$ since $\tilde{\gs}_j$ is a smooth cone.  Let $z_1,\ldots,z_n,z$ be the local coordinates on $\tilde{U}_j$ corresponding to the generators of $(\tilde{\gs}_j)^{\vee}$ given by $(u_{k_1},0),\ldots,(u_{k_n},0),(u,1)$, respectively.  Then, explicitly, the $\tilde{T}$-action on $\tilde{U}_j\cong\bbC^{n+1}$ is given by
$$
(t_1,\ldots,t_n,t_{n+1})\cdot(z_1,\ldots,z_n,z)
=
\left(\chi^{u_{k_1}}\!(t_1,\ldots,t_n)\cdot z_1
,\ldots,
\chi^{u_{k_n}}\!(t_1,\ldots,t_n)\cdot z_n,
\chi^u(t_1,\ldots,t_n)\cdot t_{n+1}\cdot z\right).
$$

Now, the bundle $\calT_{Y'}(-\log X)$ trivializes on $\tilde{U}_j$ and has a local frame given by
$$
\left\{\frac{\partial}{\partial z_1},\ldots,\frac{\partial}{\partial z_n},z\frac{\partial}{\partial z}\right\}.
$$
In terms of the local frame, the $\tilde{T}$-action on $\calT_{Y'}(-\log X)$ is given by
$$
\begin{dcases}
(t_1,\ldots,t_n,t_{n+1})\cdot
\left.{\frac{\partial}{\partial z_l}}\right|_{\tilde{p}}
=
\chi^{u_{k_l}}\!(t_1,\ldots,t_n)\cdot
\pdv{z_l}\bigg|_{(t_1,\ldots,t_n,t_{n+1}).\tilde{p}}
&
\text{for }l=1,\ldots,n,
\\
(t_1,\ldots,t_n,t_{n+1})\cdot
\left.\left(
z{\frac{\partial}{\partial z}}
\right)\right|_{\tilde{p}}
=
\left.\left(
z{\frac{\partial}{\partial z}}
\right)\right|_{(t_1,\ldots,t_n,t_{n+1}).\tilde{p}},
\end{dcases}
$$
where $(t_1,\ldots,t_n,t_{n+1})\in\tilde{T}$ and $\tilde{p}\in\tilde{U}_j$.  The first line is simply the $\tilde{T}$-action on tangent vectors, and the second line can be obtained by first considering points $\tilde{p}$ with nonzero $z$-coordinate and then extending the action to points with zero $z$-coordinate by continuity.  }

\subsection{The canonical extension as a restriction of the logarithmic tangent sheaf}

Now, we restrict the toric vector bundle $\calT_Y(-\log X)$ back to $X$ to get the extension $\calE$.

\begin{proof}[Proof of Proposition~\ref{prop:fltrE}]
First, we define a $T$-action on $\calE$.  Define $\gi\colon T\to\tilde{T}$ by
	$$
	(t_1,\ldots,t_n)\longmapsto(t_1,\ldots,t_n,1),
	$$
and set
	$$
	t\cdot e:=\gi(t)\cdot e
	$$
for $t\in T$ and $e\in\calE$, where the left-hand side is the $\tilde{T}$-action on $\calT_Y(-\log X)$.  Note that $i\circ t=\gi(t)\circ i$ for all $t\in T$, where $i$ is the inclusion $X\hookrightarrow Y$.  Thus, this $T$-action on $\calE$ is equivariant.
	
Fix a maximal cone $\gs_j$ in $\gD_X$, and let $\tilde{\gs}_j,U_j,\tilde{U}_j,v_{k_l},u_{k_l},z_l,z$ be defined as in the previous section.  Note that $U_j\subseteq\tilde{U}_j$.  Restricting the local frame of $\calT_Y(-\log X)$ on $\tilde{U}_j$ to $U_j$ yields a local frame of $\calE$ on $U_j$ given by $\left\{\pdv{z_1},\ldots,\pdv{z_n},z\pdv{z}\right\}$.  Explicitly, the $T$-action on $\calE|_{U_j}$ is given by
	$$
	\begin{dcases}
		(t_1,\ldots,t_n)\cdot
		\pdv{z_l}\bigg|_{p}
		=
		\chi^{u_{k_l}}\!(t_1,\ldots,t_n)\cdot
		\pdv{z_l}\bigg|_{(t_1,\ldots,t_n).p}
		&
		\text{for }l=1,\ldots,n,
		\\
		(t_1,\ldots,t_n)\cdot
		\left(z\pdv{z}\right)\bigg|_{p}
		=
		\left(z\pdv{z}\right)\bigg|_{(t_1,\ldots,t_n).p}.
	\end{dcases}
	$$
Then, the sections $\pdv{z_1},\ldots,\pdv{z_n},z\pdv{z}$ are $T$-eigensections with
	$$
	\begin{dcases}
		(t_1,\ldots,t_n)\cdot
		\pdv{z_l}
		=
		\chi^{u_{k_l}}\!(t_1,\ldots,t_n)
		\pdv{z_l}
		&
		\text{for }l=1,\ldots,n,
		\\
		(t_1,\ldots,t_n)\cdot
		\left(z\pdv{z}\right)
		=
		z\pdv{z}
		=
		\chi^{(0,\ldots,0)}(t_1,\ldots,t_n)\cdot
		\left(z\pdv{z}\right).
	\end{dcases}
	$$
	
Now, we apply Lemma~\ref{lem:2.1} to get the filtrations.  Let $t_0=(1,\ldots,1)\in T$ be the identity.  Note that $ \pdv{z_1}\big|_{t_0},\ldots,\pdv{z_n}\big|_{t_0},\left(z\pdv{z}\right)\big|_{t_0} $ form a basis of $E=\calE_{t_0}$.  We may choose a set of coordinates of $E$ so that
	$$
	\pdv{z_l}\bigg|_{t_0}\!\!=(v_{k_l},-1)
	\quad\text{ for }
	l=1,\ldots,n
	\quad\text{ and }\quad
	\left(z\pdv{z}\right)\bigg|_{t_0}\!\!=(0,\ldots,0,1).
	$$
Then, we obtain
	$$
	E^{\rho_{k_l}}(i)=
	\begin{cases}
		E,								&	i\leq 0,	\\
		\Span_{\bbC}\{(v_{k_l},-1)\},	&	i=1,		\\
		0,					&	i>2
	\end{cases}
	$$
for $1$-dimensional cones $\rho_{k_1},\ldots,\rho_{k_n}$ contained in $\gs_j$.
	
To show the same for other $1$-dimensional cones, let $\gs_{j'}\in\gD_X$ be another maximal cone, and define $\tilde{\gs}_{j'},U_{j'},\tilde{U}_{j'},v_{k'_l},u_{k'_l},z'_l,z'$ in the same way.  Similarly to above, $\big\{\pdv{z'_1},\ldots,\pdv{z'_n},z'\pdv{z'}\big\}$ is a local frame of $\calE$ on $U_{j'}$ consisting of $T$-eigensections with
	$$
	\begin{dcases}
		(t_1,\ldots,t_n)\cdot
		\pdv{z'_l}
		=
		\chi^{u_{k'_l}}\!(t_1,\ldots,t_n)
		\pdv{z'_l}
		&
		\text{for }l=1,\ldots,n,
		\\
		(t_1,\ldots,t_n)\cdot
		\left(z'\pdv{z'}\right)
		=
		z'\pdv{z'}
		=
		\chi^{(0,\ldots,0)}(t_1,\ldots,t_n)\cdot
		\left(z'\pdv{z'}\right).
	\end{dcases}
	$$
	
The claim is that $\pdv{z'_l}\big|_{t_0}$ (where $l=1,\ldots,n$) and $z'\pdv{z'}\big|_{t_0}$ are exactly $(v_{k'_l},-1)$ and $(0,\ldots,0,1)$ in terms of the coordinates we have chosen for $E$.  In that case, Lemma~\ref{lem:2.1} will imply that the filtrations $E^{\rho_{k'_l}}(i)$ corresponding to the $1$-dimensional cones $\rho_{k'_l}$ in $\gs_{j'}$ are also as desired.  The proof will then be done since every $1$-dimensional cone is contained in a maximal cone (by the fact that $\gD_X$ is complete).
	
Now, to prove the claim, we first consider the change of basis matrix between the two bases of $E$, $\big\{(v_{k_l},-1),(0,\ldots,0,1)\big\}$ and $\big\{(v_{k'_l},-1),(0,\ldots,0,1)\big\}$.  The change of basis matrix between their dual bases, $\big\{(u_{k_l},0),(u,1)\big\}$ and $\big\{(u_{k'_l},0),(u',1)\big\}$, can be written as
	$$
	\begin{pmatrix}
		(u_{k'_1},0)	\\
		\vdots			\\
		(u_{k'_n},0)	\\[.3ex]
		(u',1)
	\end{pmatrix}
	=
	\begin{pmatrix}
		&			&		&	\rvline	&	0		\\
		&	A		&		&	\rvline	&	\vdots	\\
		&			&		&	\rvline	&	0		\\[.3ex]
		\hline
		b_1	&	\cdots	&	b_n	&	\rvline	&	1
	\end{pmatrix}
	\begin{pmatrix}
		(u_{k_1},0)	\\
		\vdots		\\
		(u_{k_n},0)	\\[.3ex]
		(u,1)
	\end{pmatrix},
	$$
where $A\in\bbZ^{n\times n}$ is the change of basis matrix between $\{u_{k_l}\}$ and $\{u_{k'_l}\}$ and the $b_l$ are integers such that $u'=u+\sum_{l=1}^n b_lu_{k_l}$.  Taking the dual of this relation, we get
	$$
	\begin{pmatrix}
		(v_{k'_1},-1)	\\
		\vdots			\\
		(v_{k'_n},-1)	\\[.3ex]
		(0,\ldots,0,1)
	\end{pmatrix}
	=
	\begin{bmatrix}
		\begin{pmatrix}
			&			&		&	\rvline	&	0		\\
			&	A		&		&	\rvline	&	\vdots	\\
			&			&		&	\rvline	&	0		\\[.3ex]
			\hline
			b_1	&	\cdots	&	b_n	&	\rvline	&	1
		\end{pmatrix}
		^{\!\!T} \;
	\end{bmatrix}
	^{\!-1}\!\!
	\begin{pmatrix}
		(v_{k_1},-1)	\\
		\vdots			\\
		(v_{k_n},-1)	\\[.3ex]
		(0,\ldots,0,1)
	\end{pmatrix}
	=
	\begin{pmatrix}
		&			&		&	\rvline	&	b_1		\\
		&	A^T		&		&	\rvline	&	\vdots	\\
		&			&		&	\rvline	&	b_n		\\[.3ex]
		\hline
		0	&	\cdots	&	0	&	\rvline	&	1
	\end{pmatrix}
	^{\!\!\!-1}\!
	\begin{pmatrix}
		(v_{k_1},-1)	\\
		\vdots			\\
		(v_{k_n},-1)	\\[.3ex]
		(0,\ldots,0,1)
	\end{pmatrix}
	.
	$$
	
On the other hand, let $U_{jj'}:=U_j\cap U'_j$, and consider the transition function
	$$
	g_{j'j}\colon
	\calE\Big|_{U_j}\Big|_{U_{jj'}}
	\lra
	\calE\Big|_{U_j}\Big|_{U_{jj'}}.
	$$
Note that the transition function $\tilde{g}_{j'j}$ of $\calT_Y(-\log X)$ is given by
	$$
	\begin{dcases}
		\tilde{g}_{j'j}
		\left(\pdv{z_{l_0}}\bigg|_{\tilde{p}}\right)
		=
		\sum_{l'=1}^n\left(
		a_{l'l_0}
		\cdot
		z_{l_0,\tilde{p}}^{-1}
		\cdot
		\prod_{l=1}^nz_{l,\tilde{p}}^{a_{l'l}}
		\cdot
		\pdv{z'_{l'}}\bigg|_{\tilde{p}'}
		\right)
		+
		b_{l_0}
		\cdot
		z_{l_0,\tilde{p}}^{-1}
		\cdot 
		\left(z'\pdv{z'}\right)\bigg|_{\tilde{p}'}
		,
		\\
		\tilde{g}_{j'j}
		\left(\left(z\pdv{z}\right)\bigg|_{\tilde{p}}\right)
		=
		\left(z'\pdv{z'}\right)\bigg|_{\tilde{p}'},
	\end{dcases}
	$$
where $\tilde{p}=(z_{1,\tilde{p}},\ldots,z_{n,\tilde{p}},z_{\tilde{p}})\in\tilde{U}_j\cap\tilde{U}_{j'}$ and $\tilde{p}'$ is the point in $\tilde{U}_{j'}$ corresponding to $\tilde{p}$.  Restricting this to $\calE$ and noting that $t_0\in T\subseteq U_j$ corresponds to the point $(z_1,\ldots,z_n,z)=(1,\ldots,1,0)\in\tilde{U}_j$, we have
	$$
	\begin{dcases}
		g_{j'j}
		\left(\pdv{z_l}\bigg|_{t_0}\right)
		=
		\sum_{l'=1}^n
		\left(a_{l'l}\cdot\pdv{z'_{l'}}\bigg|_{t_0}\right)
		+b_l
		\cdot
		\left(z'\pdv{z'}\right)\bigg|_{t_0}
		&
		\text{for }l_0=1,\ldots,n,
		\\
		g_{j'j}
		\left(\left(z\pdv{z}\right)\bigg|_{t_0}\right)
		=
		\left(z'\pdv{z'}\right)\bigg|_{t_0}
	\end{dcases}
	$$
	or,
	in terms of matrices,
	$$
	\begin{pmatrix}
		\pdv{z_1}\big|_{t_0}			\\
		\vdots							\\
		\pdv{z_n}\big|_{t_0}			\\[1ex]
		\left(z\pdv{z}\right)\big|_{t_0}
	\end{pmatrix}
	=
	\begin{pmatrix}
			&			&		&	\rvline	&	b_1		\\
			&	A^T		&		&	\rvline	&	\vdots	\\
			&			&		&	\rvline	&	b_n		\\[1ex]
			\hline
		0	&	\cdots	&	0	&	\rvline	&	1
	\end{pmatrix}
	\begin{pmatrix}
		\pdv{z'_1}\big|_{t_0}			\\
		\vdots							\\
		\pdv{z'_n}\big|_{t_0}			\\[1ex]
		\left(z'\pdv{z'}\right)\big|_{t_0}
	\end{pmatrix}.
	$$
	Equivalently,
	we have
	$$
	\begin{pmatrix}
		\pdv{z'_1}\big|_{t_0}			\\
		\vdots							\\
		\pdv{z'_n}\big|_{t_0}			\\[1ex]
		\left(z'\pdv{z'}\right)\big|_{t_0}
	\end{pmatrix}
	=
	\begin{pmatrix}
		&			&		&	\rvline	&	b_1		\\
		&	A^T		&		&	\rvline	&	\vdots	\\
		&			&		&	\rvline	&	b_n		\\[1ex]
		\hline
		0	&	\cdots	&	0	&	\rvline	&	1
	\end{pmatrix}
	^{-1}
	\begin{pmatrix}
		\pdv{z_1}\big|_{t_0}			\\
		\vdots							\\
		\pdv{z_n}\big|_{t_0}			\\[1ex]
		\left(z\pdv{z}\right)\big|_{t_0}
	\end{pmatrix},
	$$
which matches the change of basis matrix we computed above.  Thus, we have proved the claim, and the proof is done.
\end{proof}

\section{Affine subspace concentration conditions} \label{sec:ASCC}
{ Let $M\,(\cong\bbZ^n)$ be a rank $n$ lattice, and define $M_{\bbR}:=M\otimes_{\bbZ}\bbR\cong \bbR^n$.  Let $P\subseteq M_{\bbR}$ be a smooth and reflexive lattice polytope with barycenter at the origin.  By replacing $M_{\bbR}$ with its subspace spanned by $P$, we may assume that $P$ is full-dimensional, \textit{i.e.}, $\dim P=n$.

Such a polytope corresponds to an $n$-dimensional smooth Fano toric variety $X=X_P$, together with its anticanonical divisor $-K_X$.  The fan $\gD_X$ of $X$ in $N_{\bbR}$ is the inner normal fan of $P$, where $N$ is the dual lattice of $M$ and $N_{\bbR}:=N\otimes_{\bbZ}\bbR\cong \bbR^n$.

Denote the $1$-dimensional cones in $\gD_X$ by $\rho_1,\ldots,\rho_m$.  For each $k=1,\ldots,m$, let $v_k\in N$ be the primitive generator of $\rho_k$ and $P_k$ the facet of $P$ corresponding to $\rho_k$.
We also define $\vol(P_k)$ to be the lattice volume of the facet $P_k$ with respect to the intersection of $M$ with the affine span of $P_k$.  }

{ We will use the following proposition from \cite{HNS19} which characterizes the stability of toric vector bundles.

\begin{prop}[\textit{cf.}~\protect{\cite[Proposition 2.3]{HNS19}}]
	\label{prop:HNS19}
Let $(X,\calO_X(D))$ be a polarized smooth toric variety and $Q\subseteq M_{\bbR}$ the corresponding polytope.  For each $1$-dimensional cone $\rho$ in the fan of $X$, let $Q^{\rho}$ be the facet of $Q$ corresponding to $\rho$ and $\vol(Q^{\rho})$ the lattice volume of the facet $Q^{\rho}$ with respect to the intersection of\, $M$ with the affine span of\,~$Q^{\rho}$.  A toric vector bundle $\calE$ on $X$ corresponding to filtrations $E^{\rho}(i)$ is $($semi-$)$stable with respect to $\calO_X(D)$ if and only if the following inequality holds for every proper\, $\bbC$-linear subspace $F\subsetneq E$ and $F^{\rho}(i) = E^{\rho}(i) \cap F$: 
	\begin{equation}\label{eqn:ASCC}
		\frac{1}{\dim F}\sum_{i,\rho}i\cdot f^{[\rho]}(i) \cdot \vol(Q^{\rho})
		\overset{(\leq)}{<}
		\frac{1}{\dim E}\sum_{i,\rho}i\cdot e^{[\rho]}(i) \cdot \vol(Q^{\rho}),
		\tag{$*$}
	\end{equation}
where $f^{[\rho]}(i):=\dim F^{\rho}(i)-\dim F^{\rho}(i+1)$ and $e^{[\rho]}(i):=\dim E^{\rho}(i)-\dim E^{\rho}(i+1)$.
\end{prop}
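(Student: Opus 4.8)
The plan is to reduce the slope $(\text{semi})$stability of $\calE$ to a statement about its $T$-equivariant subsheaves, to classify those via Klyachko's theorem, and to evaluate their slopes by the toric intersection formula; the inequality in the statement is then a direct transcription. For the reduction: the first step of the Harder--Narasimhan filtration of a $T$-equivariant sheaf is unique, hence $T$-equivariant, so $\calE$ can fail to be semistable only through a proper $T$-invariant subsheaf. Similarly, since the socle of a semistable bundle (its maximal polystable subsheaf of maximal slope) is canonical, hence $T$-invariant --- and, when $\calE$ is polystable but not stable, a suitable $T$-invariant refinement of its isotypic decomposition still produces a proper $T$-invariant summand of slope $\mu_D(\calE)$ --- the bundle $\calE$ can fail to be stable only through a proper nonzero $T$-invariant subsheaf of slope $\mu_D(\calE)$; here $\mu_D$ denotes the $\calO_X(D)$-slope $\deg_{\calO_X(D)}(\,\cdot\,)/\dim(\,\cdot\,)$. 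Finally, replacing a $T$-invariant subsheaf by its saturation in $\calE$ leaves the rank unchanged and can only increase the slope, so it suffices to bound $\mu_D(\calF)$ over $T$-invariant \emph{saturated} subsheaves $\calF\subseteq\calE$.

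By the description of $T$-equivariant reflexive sheaves on a smooth toric variety (\cite{Kly90,Per04}), such a $\calF$ is determined by a $\bbC$-subspace $F\subseteq E$, the corresponding filtrations being exactly the induced ones $F^\rho(i)=E^\rho(i)\cap F$ appearing in the statement; conversely every proper subspace $F\subsetneq E$ arises this way, and a $T$-invariant subsheaf with underlying space $F$ but smaller filtrations has this $\calF$ as its saturation. Taking $F=E$ recovers $\calE$ itself. Hence $\calE$ is $(\text{semi})$stable with respect to $\calO_X(D)$ if and only if $\mu_D(\calF)\overset{(\le)}{<}\mu_D(\calE)$ for every proper subspace $F\subsetneq E$, where $\calF$ is the saturated subsheaf attached to $F$.

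It remains to identify $\mu_D(\calF)$ with the combinatorial quantity in the statement, which is the technical heart. By the standard computation of the first Chern class of a $T$-equivariant reflexive sheaf from its Klyachko data, $c_1(\calF)=\sum_\rho\bigl(\sum_i i\,f^{[\rho]}(i)\bigr)D_\rho$: for rank one this is the classical identity $c_1\bigl(\calO_X(\operatorname{div}\chi^u)\bigr)=\sum_\rho\langle u,v_\rho\rangle D_\rho$, whose $\rho$-filtration jumps precisely at $i=\langle u,v_\rho\rangle$ (cf.\ the conventions in Lemma~\ref{lem:2.1}), and the general case follows by additivity of $c_1$ and of the jump counts $f^{[\rho]}(i)$. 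Pairing with the toric intersection number $D_\rho\cdot c_1(\calO_X(D))^{n-1}=\vol(Q^\rho)$ (the lattice volume of the facet, taken against $M$ intersected with its affine span, in the normalization for which a unimodular simplex has volume $1$) gives $\deg_{\calO_X(D)}(\calF)=\sum_{i,\rho}i\,f^{[\rho]}(i)\,\vol(Q^\rho)$, hence $\mu_D(\calF)=\frac{1}{\dim F}\sum_{i,\rho}i\,f^{[\rho]}(i)\,\vol(Q^\rho)$, and likewise $\mu_D(\calE)=\frac{1}{\dim E}\sum_{i,\rho}i\,e^{[\rho]}(i)\,\vol(Q^\rho)$. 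So the $(\text{semi})$stability inequality $\mu_D(\calF)\overset{(\le)}{<}\mu_D(\calE)$ of the previous paragraph is literally inequality \eqref{eqn:ASCC}, and the proof is complete.

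The routine skeleton is the equivariance reduction in the first paragraph. The main obstacle is assembling the remaining two ingredients correctly: the identification of $T$-equivariant saturated subsheaves with subspaces $F\subseteq E$ carrying the induced filtrations (this rests on the Klyachko--Perling formalism and on the fact that the saturation is the subsheaf obtained by enlarging the filtrations to $E^\rho(i)\cap F$), and the degree formula $c_1(\calF)=\sum_\rho(\sum_i i\,f^{[\rho]}(i))D_\rho$ together with $D_\rho\cdot c_1(\calO_X(D))^{n-1}=\vol(Q^\rho)$ --- standard, but requiring care with the sign and normalization conventions so that \eqref{eqn:ASCC} comes out in the stated direction (one can cross-check on $\calE=\calT_{\bbP^n}$, which is stable, and where both sides are computed directly).
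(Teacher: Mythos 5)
The paper does not prove this proposition at all: it is imported as a black box from \cite[Proposition 2.3]{HNS19}, so there is no internal proof to compare yours against. Judged on its own, your argument is the standard proof of the cited result and is essentially correct, in all three of its steps: (i) the reduction of (semi)stability to $T$-equivariant saturated subsheaves via uniqueness of the Harder--Narasimhan filtration and of the socle; (ii) the identification, via the Klyachko--Perling formalism, of such subsheaves with subspaces $F\subseteq E$ carrying the induced filtrations $F^{\rho}(i)=E^{\rho}(i)\cap F$; and (iii) the degree formula $c_1(\calF)=\sum_{\rho}\bigl(\sum_i i\,f^{[\rho]}(i)\bigr)D_{\rho}$ paired with $D_{\rho}\cdot c_1(\calO_X(D))^{n-1}=\vol(Q^{\rho})$ (your sign convention is consistent with the paper's Lemma~\ref{lem:2.1}, as one can confirm on $\calT_X$, where the jump at $i=1$ in each frame direction reproduces $-K_X=\sum_\rho D_\rho$). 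The one place where the write-up is thinner than it should be is the stability direction of step (i) when $\calE$ is polystable with a single isotypic component, $\calE\cong\calE_1^{\oplus m}$ with $m\ge 2$: there both the socle and the isotypic decomposition are all of $\calE$ and yield no proper subsheaf, and "a suitable $T$-invariant refinement" is an assertion rather than an argument. The standard repair is to observe that the direct summands of $\calE$ isomorphic to $\calE_1^{\oplus k}$ ($0<k<m$) are parametrized by a Grassmannian of subspaces of $\Hom(\calE_1,\calE)\cong\bbC^{m}$ on which $T$ acts algebraically, so Borel's fixed point theorem for the connected solvable group $T$ produces a $T$-invariant proper summand of slope $\mu_D(\calE)$. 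With that patch the proof is complete; note also that the application in Proposition~\ref{prop:ver1} only uses semistability and polystability, for which your argument as written already suffices.
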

}

{ It turns out that to check the stability of toric vector bundle, it suffices to consider equivariant reflexive subsheaves $\calF$ of $\calE$ corresponding to a $\bbC$-linear subspace $F$ of $E$ together with the filtrations $\{F^{\rho}(i):=F\cap E^{\rho}(i)\}$.  Then the equation (\ref{eqn:ASCC}) is equivalent to the statement that $\mu(\calF)\leq\mu(\calE)$.  }

{ In our setting, we consider the anticanonical polarization $D=-K_X$ and the canonical extension $\calE$ of $\calT_X$ by $\calO_X$ with
  the extension class $c_1(\calT_X)\in\Ext^1(\calO_X,\calT_X)$.  }

\begin{prop} \label{prop:ver1}
Let $E:=N_{\bbC}\oplus\bbC\cong\bbC^{n+1}$.  For every proper\, $\bbC$-linear subspace $F\subsetneq E$, we have 
	$$
	\frac{1}{\dim_{\bbC} F}
	\sum_{k\,:\,(v_k,-1)\in F}\vol(P_k)
	\leq
	\frac{1}{n+1}\sum_{k=1}^m\vol(P_k).
	$$
In addition, whenever  equality holds for some $F$,  equality also holds for some subspace $F'\subsetneq E$ complementary to~$F$.
\end{prop}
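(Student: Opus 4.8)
The plan is to recognize the claimed inequality as the semistability condition~\eqref{eqn:ASCC} of Proposition~\ref{prop:HNS19}, applied to the canonical extension $\calE$ with the anticanonical polarization $D=-K_X$, and then to deduce the needed slope polystability of $\calE$ from the existence of a K\"ahler--Einstein metric on $X$. Since $P$ is reflexive, $\sum_k D_k=-K_X$, so the polytope attached to $(X,-K_X)$ in Proposition~\ref{prop:HNS19} is exactly $P$; in particular $Q^{\rho_k}=P_k$ and $\vol(Q^{\rho_k})=\vol(P_k)$ for every $k$.

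First I would unwind~\eqref{eqn:ASCC} for $\calE$ using the filtrations of Proposition~\ref{prop:fltrE}. From $E^{\rho_k}(i)=E$ for $i\le 0$, $E^{\rho_k}(1)=\Span_{\bbC}\{(v_k,-1)\}$, and $E^{\rho_k}(i)=0$ for $i\ge 2$, one reads off $e^{[\rho_k]}(0)=n$, $e^{[\rho_k]}(1)=1$, and $e^{[\rho_k]}(i)=0$ otherwise, so $\sum_i i\,e^{[\rho_k]}(i)=1$ and the right-hand side of~\eqref{eqn:ASCC} equals $\tfrac{1}{n+1}\sum_{k=1}^m\vol(P_k)$. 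Similarly, for a proper subspace $F\subsetneq E$ the induced filtration $F^{\rho_k}(i)=F\cap E^{\rho_k}(i)$ is $F$ for $i\le 0$, is $F\cap\Span_{\bbC}\{(v_k,-1)\}$ for $i=1$, and is $0$ for $i\ge 2$, so $\sum_i i\,f^{[\rho_k]}(i)$ equals $1$ if $(v_k,-1)\in F$ and $0$ otherwise, whence the left-hand side of~\eqref{eqn:ASCC} is $\tfrac{1}{\dim F}\sum_{k\,:\,(v_k,-1)\in F}\vol(P_k)$. Hence the non-strict form of~\eqref{eqn:ASCC}, as $F$ ranges over all proper subspaces, is precisely the asserted inequality, and by Proposition~\ref{prop:HNS19} it holds if and only if $\calE$ is semistable with respect to $\calO_X(-K_X)$.

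Next I would invoke the K\"ahler geometry. Because $P$ has its barycenter at the origin, $X$ carries a K\"ahler--Einstein metric $\omega$ with $[\omega]=c_1(X)=c_1(-K_X)$ by~\cite{WZ04,Mab87}; by Tian's theorem~\cite{Tia92}, the canonical extension $\calE$ (the one built from the class $c_1(\calT_X)$) then admits a Hermitian--Einstein metric with respect to $\omega$, and the easy direction of the Donaldson--Uhlenbeck--Yau theorem shows $\calE$ is slope polystable with respect to $\calO_X(-K_X)$. In particular $\calE$ is semistable, which together with the previous paragraph proves the inequality for every proper $F\subsetneq E$.

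Finally I would treat the equality clause. If equality holds for some $F$, then by the remarks following Proposition~\ref{prop:HNS19} the equivariant reflexive subsheaf $\calF\subseteq\calE$ associated to $F$ and the filtrations $\{F\cap E^{\rho}(i)\}$ satisfies $\mu(\calF)=\mu(\calE)$. Since $\calE$ is polystable, a saturated subsheaf of slope $\mu(\calE)$ splits off as a direct summand, and this splitting can be chosen $T$-equivariantly, yielding an equivariant subbundle $\calF'$ with $\calE=\calF\oplus\calF'$; on the fiber over the identity $t_0\in T$ this gives $E=F\oplus F'$, and $\deg\calE=\deg\calF+\deg\calF'$ together with $\mu(\calF)=\mu(\calE)$ forces $\mu(\calF')=\mu(\calE)$, which is exactly equality in~\eqref{eqn:ASCC} for the complementary subspace $F'$. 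Granting the filtrations of Proposition~\ref{prop:fltrE} and Proposition~\ref{prop:HNS19}, the combinatorics is routine; the real content — and the only genuine obstacle — is the analytic input, namely the existence of the K\"ahler--Einstein metric and Tian's Hermitian--Einstein metric on the canonical extension, which is what converts the identities of the second paragraph into an honest inequality and, through polystability rather than mere semistability, pins down the equality case.
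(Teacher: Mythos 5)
Your proposal is correct and follows essentially the same route as the paper: unwind the semistability inequality of Proposition~\ref{prop:HNS19} using the filtrations from Proposition~\ref{prop:fltrE}, obtain polystability of $\calE$ from the K\"ahler--Einstein metric via Tian's theorem and the easy direction of Donaldson--Uhlenbeck--Yau, and use polystability to produce the complementary subspace in the equality case. The only difference is that you spell out the computation of $e^{[\rho_k]}(i)$ and the right-hand side explicitly, which the paper leaves implicit.
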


\begin{proof}
First, since the barycenter of $P$ is at the origin, by \cite[Corollary 1.3]{WZ04} and \cite[Corollary 5.5]{Mab87}, we have that $X$ admits a K\"ahler--Einstein metric.  Then, applying \cite[Theorem 1.1]{Tia92} and the easy direction of the Donaldson--Uhlenbeck--Yau theorem, we obtain that $\calE$ is polystable with respect to $\calO_X(-K_X)$.  In particular, $\calE$ is semistable with respect to $\calO_X(-K_X)$.
	
Define $F^{\rho_k}(i):=F\cap E^{\rho_k}(i)$.  Then, by Proposition~\ref{prop:fltrE}, for all $k=1,\ldots,m$, we have $F^{\rho_k}(i)=F$ for $i\leq 0$, $F^{\rho_k}(i)=0$ for $i\geq 2$, and
	$$
	F^{\rho_k}(1)=
	\begin{cases}
		\Span_{\bbC}\{(v_k,-1)\}	&\text{if }(v_k,-1)\in F,
		\\
		0							&\text{if }(v_k,-1)\notin F.
	\end{cases}
	$$
Thus, using the notation in Proposition~\ref{prop:HNS19}, we have $f^{[\rho_k]}(i)=0$ for all $i\neq 0,1$ and
	$$
	f^{[\rho_k]}(1)=
	\begin{cases}
		1	&\text{if }(v_k,-1)\in F,
		\\
		0	&\text{if }(v_k,-1)\notin F.
	\end{cases}
	$$
The inequality then follows from Proposition~\ref{prop:HNS19}.
	
To see the second part, note that a proper $\bbC$-linear subspace $F\subsetneq E$ such that  equality holds corresponds to a proper subbundle $\calF$ of $\calE$ that has the same slope as $\calE$.  Since $\calE$ is polystable, $\calE$ has another (proper) subbundle $\calF'$ with the same slope such that $\calE\cong\calF\oplus\calF'$.  Then the subspace $F'$ corresponding to $\calF'$ is exactly the desired subspace.
\end{proof}

Next, we replace the complex vector space $E$ with a real one. 

\begin{cor} \label{cor:ver2}
Let $V:=N_{\bbR}\oplus\bbR\cong\bbR^{n+1}$.  For every proper $\bbR$-linear subspace $W\subsetneq V$, we have 
	$$
	\frac{1}{\dim_{\bbR} W}
	\sum_{k\,:\,(v_k,-1)\in W}\vol(P_k)
	\leq
	\frac{1}{n+1}\sum_{k=1}^m\vol(P_k).
	$$
In addition, whenever  equality holds for some $W$,  equality also holds for some subspace $W'\subsetneq V$ complementary to~$W$.
\end{cor}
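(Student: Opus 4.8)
The plan is to reduce Corollary~\ref{cor:ver2} to Proposition~\ref{prop:ver1} by complexification. Given a proper nonzero $\bbR$-linear subspace $W\subsetneq V$, I would set $F:=W\otimes_{\bbR}\bbC\subseteq V\otimes_{\bbR}\bbC=E$, which is a proper $\bbC$-linear subspace with $\dim_{\bbC}F=\dim_{\bbR}W$. The one elementary point to record is that a vector $x\in V$ lies in $F$ if and only if it already lies in $W$: writing $x$ in an $\bbR$-basis of $W$ with complex coefficients and applying the conjugation of $E$ that fixes $V$ forces those coefficients to be real. Since each $(v_k,-1)$ is a real vector, this gives $\{k:(v_k,-1)\in W\}=\{k:(v_k,-1)\in F\}$, and feeding $F$ into Proposition~\ref{prop:ver1} produces the asserted inequality word for word.

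For the equality clause, suppose equality holds for $W$; then it holds for $F=W\otimes_{\bbR}\bbC$, and Proposition~\ref{prop:ver1} furnishes a $\bbC$-linear complement $F'$ of $F$ for which equality also holds. Adding the two equalities and using that $F\cap F'=0$ together with $\vol(P_k)>0$ for all $k$, I would conclude that the index sets $\{k:(v_k,-1)\in F\}$ and $\{k:(v_k,-1)\in F'\}$ partition $\{1,\dots,m\}$; in particular every normal $(v_k,-1)$ that is not in $W$ must lie in $F'$. Now set $U:=\Span_{\bbR}\{(v_k,-1):(v_k,-1)\notin W\}\subseteq V$. Since $U\otimes_{\bbR}\bbC\subseteq F'$ and $F\cap F'=0$, we get $U\cap W=0$, so $U$ extends to an $\bbR$-linear complement $W'$ of $W$ in $V$ (which is again proper and nonzero, as $1\le\dim_{\bbR}W\le n$).

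It remains to verify equality for $W'$, which is a short bookkeeping step. Every $(v_k,-1)\notin W$ lies in $U\subseteq W'$, and every $(v_k,-1)\in W$ fails to lie in $W'$ (otherwise it would lie in $W\cap W'=0$, impossible since it is nonzero); hence $\{k:(v_k,-1)\in W'\}=\{k:(v_k,-1)\notin W\}$. Therefore
$$
\sum_{k\,:\,(v_k,-1)\in W'}\vol(P_k)=\sum_{k=1}^m\vol(P_k)-\sum_{k\,:\,(v_k,-1)\in W}\vol(P_k)=\frac{\dim_{\bbR}W'}{n+1}\sum_{k=1}^m\vol(P_k),
$$
using equality for $W$ and $\dim_{\bbR}W'=n+1-\dim_{\bbR}W$; dividing by $\dim_{\bbR}W'$ yields equality for $W'$.

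The substantive input — polystability of $\calE$, and hence the existence of the complement $F'$ — is already packaged inside Proposition~\ref{prop:ver1}, so no genuine difficulty remains. The only point that deserves attention is the transition from a complex complement $F'$ to a genuine real complement $W'$: one cannot simply take $F'\cap V$, since that intersection can be strictly smaller than a complement of $W$, which is exactly why the argument must route through the real span $U$ of the normal vectors that are forced into $F'$.
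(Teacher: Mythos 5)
Your proof is correct, and the first half is exactly the paper's argument: complexify $W$ to $F=W\otimes_{\bbR}\bbC$, observe that a real vector such as $(v_k,-1)$ lies in $F$ if and only if it lies in $W$, and quote Proposition~\ref{prop:ver1}. For the equality clause the two proofs diverge in how they descend from the complex complement $F'$ to a real one. The paper simply sets $W':=F'\cap V$: a priori this only gives $\dim_{\bbR}W'\le\dim_{\bbC}F'$, but since $\{k:(v_k,-1)\in W'\}=\{k:(v_k,-1)\in F'\}$ the quantity $\frac{1}{\dim_{\bbR} W'}\sum_{k:(v_k,-1)\in W'}\vol(P_k)$ is then at least $\frac{1}{n+1}\sum_k\vol(P_k)$, and the inequality already proved in the first part forces equality and hence $\dim_{\bbR}W'=\dim_{\bbC}F'$, so $W'$ is a genuine complement. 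You instead add the two equalities for $F$ and $F'$ to conclude that the corresponding index sets partition $\{1,\dots,m\}$ (using $F\cap F'=0$ and $\vol(P_k)>0$), take the real span $U$ of the normals forced into $F'$, and extend $U$ to a real complement of $W$; the verification of equality for this $W'$ is then pure bookkeeping. Both routes are sound and of comparable length; yours makes the partition of the facets explicit, while the paper's avoids any choice of extension. Your closing remark that one ``cannot simply take $F'\cap V$'' is overstated: one can, and the paper does --- the possible dimension drop is ruled out precisely by the first-part inequality rather than by avoiding the intersection.
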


\begin{proof}
First, view the $\bbC$-vector space $E$ defined in the previous proposition as $V\otimes_{\bbR}\bbC$.  Embed $V$ into $E$ by $w\mapsto w\otimes 1$.  By abuse of notation, we will also use $V$ to denote its image in $E$, and similarly for (real) subspaces of $V$.

An important observation is that the vectors $(v_k,-1)$ are all in $V$.  Thus, given a $\bbC$-linear subspace $E'\subseteq E$, we have $(v_k,-1)\in E'$ if and only if $(v_k,-1)\in (E'\cap V)$.
	
Now, to prove the inequality, consider $F:=W\otimes_{\bbR}\bbC$, which is a $\bbC$-linear subspace of $E$.  We have $\dim_{\bbC} F=\dim_{\bbR} W$.  Also, by the observation above, we have $(v_k,-1)\in F$ if and only if $(v_k,-1)\in W$.  Then, the inequality simply follows from Proposition~\ref{prop:ver1}.
	
Next, to show the second part of the corollary, suppose the equality holds for some real subspace $W\subsetneq V$.  Similarly to above, consider $F:=W\otimes_{\bbR}\bbC$.  Note that
the equality in Proposition~\ref{prop:ver1} holds for $F$.  Then, the second part of Proposition~\ref{prop:ver1} gives a $\bbC$-linear subspace $F'$ of $E$ which is complementary to $F$ and for which the equality in Proposition~\ref{prop:ver1} holds.
	
Consider $W':=F'\cap V$.  Note that $W'\otimes_{\bbR}\bbC\subseteq F'$, which implies $\dim_{\bbR} W'\leq\dim_{\bbC} F'$.  By the observation above, $\{(v_k,-1)\}\cap F'=\{(v_k,-1)\}\cap W'$.  Therefore, we obtain
	$$
	\frac{1}{\dim W'}
	\sum_{k\,:\,(v_k,-1)\in W'}\vol(P_k)
	\geq
	\frac{1}{\dim F'}
	\sum_{k\,:\,(v_k,-1)\in F'}\vol(P_k)
	=
	\frac{1}{n+1}\sum_{k=1}^m\vol(P_k).
	$$
But we have proved the opposite inequality in the first part.  Hence, equality must hold here.  Moreover, $W'$ is complementary to $W$ since we have $W\cap W'\subseteq F\cap F'=0$ and
	\begin{equation*}\pushQED{\qed} 
	\dim_{\bbR} W'=\dim_{\bbC} F'=n-\dim_{\bbC} F=n-\dim_{\bbR} W.\qedhere
\popQED
	\end{equation*}
\renewcommand{\qed}{}    
\end{proof}

Finally, we deduce the affine subspace concentration conditions.

\begin{thm}[Affine subspace concentration conditions]\label{thm:ASCC}
For every proper affine subspace $A\subsetneq N_{\bbR}\cong\bbR^n$, the following inequality holds: 
	$$
	\frac{1}{\dim_{\bbR} A +1}\sum_{k\,:\,v_k\in A}\vol(P_k)
	\leq
	\frac{1}{n+1}\sum_{k=1}^m \vol(P_k).
	$$

In addition, whenever  equality holds for some $A$,  equality also holds for some affine subspace $A'$ complementary to $A$.
\end{thm}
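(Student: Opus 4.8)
The plan is to transport Corollary~\ref{cor:ver2} across the standard homogenization correspondence between affine subspaces of $N_{\bbR}$ and linear subspaces of $V:=N_{\bbR}\oplus\bbR$. Given a proper affine subspace $A\subsetneq N_{\bbR}$, set
$$
W_A:=\Span_{\bbR}\{(a,-1)\mid a\in A\}\subseteq V,
$$
and fix a point $a_0\in A$ with direction (linear) subspace $A_0$, so that $W_A=\bbR\cdot(a_0,-1)+\bigl(A_0\times\{0\}\bigr)$. Comparing last coordinates shows this sum is direct and that $W_A\cap(N_{\bbR}\times\{0\})=A_0\times\{0\}$; hence $\dim_{\bbR}W_A=\dim A+1$, and since $\dim A\leq n-1$ we get $0\neq W_A\subsetneq V$, so that Corollary~\ref{cor:ver2} applies to $W=W_A$. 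Moreover $(v_k,-1)\in W_A$ if and only if $v_k\in A$: any element of $W_A$ equal to $(v_k,-1)$ must be of the form $(a_0,-1)+(w,0)$ with $w\in A_0$, because matching last coordinates forces the $(a_0,-1)$-coefficient to be $1$, whence $v_k=a_0+w\in A$; the converse is clear. (Equivalently, $(v_k,-1)\in W_A$ exhibits $v_k$ as an affine combination of points of $A$.)

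With this dictionary in hand, the inequality is immediate: applying Corollary~\ref{cor:ver2} to $W_A$ and substituting $\dim_{\bbR}W_A=\dim A+1$ and $\{k:(v_k,-1)\in W_A\}=\{k:v_k\in A\}$ turns it verbatim into
$$
\frac{1}{\dim_{\bbR}A+1}\sum_{k\,:\,v_k\in A}\vol(P_k)\leq\frac{1}{n+1}\sum_{k=1}^m\vol(P_k).
$$

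For the equality clause, suppose equality holds for some proper affine $A$. Then equality holds in Corollary~\ref{cor:ver2} for $W_A$, so the corollary produces a linear subspace $W'\subsetneq V$ with $W_A\oplus W'=V$ for which equality again holds. The one point that is not pure bookkeeping is that $W'$ cannot be contained in $N_{\bbR}\times\{0\}$: if it were, no $(v_k,-1)$ would lie in $W'$, the left-hand side of the equality for $W'$ would vanish, and this would contradict the strict positivity of $\tfrac{1}{n+1}\sum_k\vol(P_k)$ (each facet has positive lattice volume). Hence $A':=\{x\in N_{\bbR}\mid(x,-1)\in W'\}$ is a nonempty affine subspace, and repeating the argument of the first paragraph for $W'$ gives $W'=W_{A'}$, $\dim A'=\dim W'-1=n-1-\dim A$, and $\{k:(v_k,-1)\in W'\}=\{k:v_k\in A'\}$, so that equality in Corollary~\ref{cor:ver2} for $W'$ is exactly equality in the theorem for $A'$.

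It remains to check that $A$ and $A'$ are complementary affine subspaces, which is again formal: $\dim A+\dim A'=n-1$ by the dimension count just made; the direction subspaces satisfy $A_0\cap A_0'=\{0\}$ since $(A_0\cap A_0')\times\{0\}\subseteq W_A\cap W'=\{0\}$; and $A\cap A'=\emptyset$ since a common point $x$ would give $(x,-1)\in W_A\cap W'=\{0\}$, which is impossible. In other words, $A\mapsto W_A$ identifies complementary affine subspaces of $N_{\bbR}$ with pairs of complementary linear subspaces of $V$ neither of which lies in $N_{\bbR}\times\{0\}$, and Corollary~\ref{cor:ver2} supplies exactly such a pair. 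The main --- and essentially only --- obstacle is the positivity observation forcing $W'\not\subseteq N_{\bbR}\times\{0\}$ so that $W'$ genuinely arises from an affine subspace; everything else is unwinding definitions.
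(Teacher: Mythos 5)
Your proof is correct and follows essentially the same route as the paper: homogenize via the embedding $v\mapsto(v,-1)$ of $N_{\bbR}$ into $V=N_{\bbR}\oplus\bbR$, identify proper affine subspaces of $N_{\bbR}$ with linear subspaces of $V$ not contained in the hyperplane $\{x_{n+1}=0\}$, apply Corollary~\ref{cor:ver2}, and use positivity of the total facet volume to rule out the complementary subspace $W'$ lying in that hyperplane. Your treatment of the complementarity of $A$ and $A'$ is somewhat more explicit than the paper's, but the argument is the same.
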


\begin{proof}
Let $x_{n+1}$ be the last coordinate of $V=N_{\bbR}\oplus\bbR\,(\cong\bbR^{n+1})$, and let $W_0$ be the hyperplane $\{x_{n+1}=0\}$ in~$V$.  Embed $N_{\bbR}$ into $V$ via $v\mapsto (v,-1)$.  By abuse of notation, we also use $N_{\bbR}$ to denote the image of this embedding, \textit{i.e.}, the hyperplane $\{x_{n+1}=-1\}$ in $V$ one unit below $W_0$.  Note that this embedding maps each $v_k\in N_{\bbR}$ to $(v_k,-1)\in V$.
	
There is a one-to-one correspondence between the affine subspaces of $N_{\bbR}$ and the linear subspaces of $V$ that are not contained in $W_0$.  Given a linear subspace $W\subseteq V$ that is not contained in $W_0$, we can get an affine subspace in $N_{\bbR}$ by taking the intersection $A:=W\cap N_{\bbR}$.  On the other hand, given an affine subspace $A\subseteq N_{\bbR}$, we can take the linear span of $A$ (and the origin $0\in V$) to recover the linear subspace $W$.
	
Note that if a linear subspace $W\subseteq V$ corresponds to an affine subspace $A\subseteq N_{\bbR}$, then $\dim_{\bbR} W=\dim_{\bbR} A +1$.  Then, it is easy to see that the inequality here follows from Corollary~\ref{cor:ver2}.
	
The second part also almost follows directly from Corollary~\ref{cor:ver2}, but we need to show that the complementary linear subspace $W'\subseteq V$ obtained from the corollary cannot be contained in $W_0$.  Indeed, since $W_0$ does not contain any of the vectors $(v_k,-1)$, every linear subspace of $W_0$ will give us $0$ on the left-hand side of the inequality in Corollary~\ref{cor:ver2}.  However, the right-hand side of the inequality is positive, so equality cannot hold for any linear subspace of $W_0$.  Therefore, $W'$ is not contained in $W_0$, and its intersection with $N_{\bbR}$ gives the desired complementary affine subspace $A'$.
\end{proof}


\newcommand{\etalchar}[1]{$^{#1}$}

\end{document}